\newcommand{\jProd}[2]{ {#1 \circ #2 } }	
\newcommand{\SOC}[1]{{\mathcal{Q}^{#1}}}
\newcommand{\CP}[1]{{\mathcal{CP}_{#1}}}
\newcommand{\COP}[1]{{\mathcal{COP}_{#1}}}
\newcommand{\interior}{\mathrm{int}\,}
\newcommand{\reInt}{\mathrm{ri}\,}
\newcommand{\reCone}{\mathrm{rec}\,}
\newcommand{\lineality}{\mathrm{lin}\,}
\newcommand{\bd}{\mathrm{bd}\,}
\newcommand{\conicHull}{\mathrm{cone}\,}
\newcommand{\norm}[1]{\lVert{#1}\rVert}
\newcommand{\inProd}[2]{\langle #1 , #2 \rangle }
\newcommand{\PSDcone}[1]{{\mathcal{S}^{#1}_+}}	
\newcommand{\nonNegative}[1]{{\mathcal{N}^{#1}}}	
\newcommand{\doubly}[1]{{\mathcal{D}^{#1}}}	
\newcommand{\eig}[2]{ \lambda_{#2}^{#1}}
\newcommand{\minFacePoint}[2]{ {\mathcal{F}(#1,#2)}}
\newcommand{\stdCone}{ {\mathcal{K}}}
\newcommand{\stdFace}{ \mathcal{F}}
\newcommand{\stdInt}{ {e}}
\newcommand{\matRank}{{\mathrm{ rank } \,}}
\renewcommand{\Re}{\mathbb{R}}    
\renewcommand{\S}{\mathcal{S}}                    
\newcommand{\tr}{\mathrm{trace}}
\newcommand{\dimSpace}{{\mathrm{dim}} \,}  
\newcommand{\jAlg}{\mathcal{E}}
\newcommand{\T}{\top\hspace{-1pt}} 
\newtheorem{definition}{Definition}
\newtheorem{lemma}[definition]{Lemma}
\newtheorem{proposition}[definition]{Proposition}
\newtheorem{example}[definition]{Example}
\newtheorem{corollary}[definition]{Corollary}
\newtheorem{theorem}[definition]{Theorem}
\newtheorem*{proposition*}{Proposition}
\theoremstyle{remark}
\title{A bound on the Carath\'{e}odory number}
\author{
	Masaru Ito%
		\thanks{Department of Mathematics, College of Science and Technology, Nihon University,
			1-8-14 Kanda-Surugadai, Chiyoda-Ku, Tokyo 101-8308, Japan 
			(\texttt{ito.m@math.cst.nihon-u.ac.jp)}.}
	\and
	Bruno F. Louren\c{c}o%
	\thanks{Department of Computer and Information Science, Faculty of Science and Technology, Seikei
		University, 3-3-1 Kichijojikitamachi, Musashino-shi, Tokyo 180-8633, Japan.
		(\texttt{lourenco@st.seikei.ac.jp)}}
}
\begin{document}
\maketitle
\begin{abstract}
The Carath\'{e}odory number $\kappa(\stdCone)$ of a pointed closed convex cone $\stdCone$ is the minimum among all the 
$\kappa$ for which every element of $\stdCone$ can be written as a nonnegative  linear combination 
of at most $\kappa$ elements belonging to extreme rays. Carath\'{e}odory's Theorem gives the bound 
 $\kappa(\stdCone) \leq \dim \stdCone$. 
 In this work we observe that this bound can be sharpened to
  $\kappa(\stdCone) \leq \ell_\stdCone - 1$, 
  where $\ell_\stdCone$ is the length of the longest chain of nonempty faces contained in $\stdCone$,
 thus tying the Carath\'{e}odory number with a key quantity that appears in 
 the analysis of facial reduction algorithms. We show that this bound is tight for several families of cones, which include symmetric cones and 
 the so-called smooth cones. We also give a family of examples showing that 
 this bound can also fail to be sharp. In addition, we furnish a new proof of a result by G{\"u}ler and Tun{\c{c}}el which states that the Carath\'{e}odory number of a 
 symmetric cone is equal to its rank. Finally, we connect our discussion to the notion of \emph{cp-rank} for completely 
 positive matrices.
\end{abstract}
\section{Introduction}
Let $\stdCone \subseteq \Re^n$ be a closed convex cone which is pointed, {\it i.e.}, $\stdCone \cap - \stdCone = \{ 0 \}$.
The \emph{Carath\'{e}odory number} of  $x \in \stdCone$ is the smallest nonnegative integer $\kappa (x)$ for which 
$$
x = d_1 + \ldots + d_{\kappa (x)},
$$ 
where each $d_i$ belongs to an extreme ray 
of $\stdCone$. We then define the  \emph{Carath\'{e}odory number of $\stdCone$} as
\begin{equation*}
\kappa (\stdCone) = \max \{ \kappa(x)\mid x \in \stdCone \}.
\end{equation*}
The Carath\'{e}odory number is a key geometric quantity and has a few
surprising connections. For instance, G\"uller and Tun\c{c}el showed that 
$\kappa(\stdCone)$ is a lower bound for the optimal barrier parameter for self-concordant 
barriers when $\stdCone$ is an homogeneous cone, see Proposition~4.1 in \cite{GO98}. 
When $\stdCone$ is, in fact, a symmetric cone, the inequality 
turns into an equality, see also the work by Tun\c{c}el and Truong \cite{TT03} and 
the related article by Tun\c{c}el and Xu \cite{TX01}.
Recently, in an article by Naldi \cite{Naldi2014}, $\kappa(\stdCone)$ was
 studied in the context of the so-called Hilbert cones.

The well-known Carath\'{e}odory Theorem tells us that the 
\emph{dimension of $\stdCone$} is an upper bound for $\kappa(\stdCone)$. In this note, we will show the  bound 
\begin{equation}\label{eq:main}
\kappa(\stdCone) \leq \ell_{\stdCone} - 1,
\end{equation}
where $\ell_{\stdCone}$ is the length of the longest chain of faces in $\stdCone$.  We 
remark that $\ell_{\stdCone}$ is an important quantity that appears in the analysis of 
\emph{facial reduction algorithms} (FRAs) \cite{Borwein1981495}. 
Namely, $\ell_{\stdCone} - 1$ is 
an upper bound for the minimum number of steps before a problem over $\stdCone$ is fully 
regularized. See \cite{article_waki_muramatsu, pataki_strong_2013} for a detailed discussion on 
facial reduction. 

Of course, $\ell_{\stdCone} - 1$ is itself bounded by $\dim \stdCone$, but here we will 
discuss several cases for which the former is strictly smaller than the latter, see Table~\ref{table:bounds_summary}.

The only extra assumption we will make is that $\stdCone$ must be a \emph{pointed cone}, that is, 
$\stdCone \cap -\stdCone = \{0\}$. This is done to ensure that $\stdCone$ has extreme rays.\footnote{Note 
that this is not a restrictive assumption. Letting $\lineality \stdCone = \stdCone \cap -\stdCone$,  
we have $
\stdCone = \stdCone \cap (\lineality \stdCone ^\perp) + \lineality \stdCone
$ and $\stdCone \cap (\lineality \stdCone ^\perp)$ is a pointed cone, for which 
our results apply.}

We now present a summary of the results. The bound 
\eqref{eq:main} is proven in Section~\ref{sec:main}. In fact, a slightly more general 
statement is proven, namely, that given $x \in \stdCone$, we have
$$\kappa(x) \leq \ell_\minFacePoint{x}{\stdCone} -1,$$
where $\minFacePoint{x}{\stdCone}$ is the minimal face of $\stdCone$ containing $x$. 
We will also show a family of cones for which the inequality in \eqref{eq:main} is strict.

Given $\stdCone$, there is a compact convex set $C$ such that $\stdCone$ 
is generated by $\{1\}\times C$. This process can also be reversed, so 
that given $C$, the cone generated by $\{1\}\times C$ is closed and pointed. 
Moreover, there is a correspondence between extreme points of $C$ and 
extreme rays of $\stdCone$. Therefore, the bound on
$\kappa (\stdCone)$ also induces a bound on $\kappa(C)$, namely
$$
\kappa (C) \leq \ell _C.
$$
This time, for $x \in C$, $\kappa (x)$ is the smallest integer for which 
we can write $x$ as a convex combination of $\kappa (x)$ extreme \emph{points}. 
As before, $\ell _C$ is the length of the longest chain of faces in $C$. This is discussed 
in Section~\ref{sec:main}.

The other contribution of this article is a discussion of several examples 
in which  \eqref{eq:main} turns into an equality. 
A part of Theorem~4.2 of \cite{TX01} shown by Tun\c{c}el and Xu asserts that 
a pointed polyhedral homogeneous convex $n$-dimensional cone $P$ satisfies 
$$
\kappa (P) = n.
$$
In this article, we will slightly generalize this fact so that we have
$$
\kappa (P) = \dim P = \ell _{P} - 1
$$
for any pointed polyhedral cone without the homogeneous hypothesis.
Moreover, we will strength the result and show that
whenever the set of extreme rays of $\stdCone$ is \emph{countable}, Equation \eqref{eq:main} turns 
into an equality. 

In \cite{LP15}, Liu and Pataki defined a \emph{smooth cone} as 
a pointed, full-dimensional cone $\stdCone$ for which all faces distinct 
from $\{0\}$ and $\stdCone$ are extreme rays. For those cones, 
we will show in Section~\ref{sec:tight} that \eqref{eq:main} holds with equality as well.

If $\stdCone$ is a symmetric cone, G\"uller and Tun\c{c}el showed 
in Lemma~4.1 of \cite{GO98} that
$$
\kappa (\stdCone) = \matRank \stdCone.
$$
However, the proof of Lemma 4.1 consists of a case-by-case analysis 
using the classification of Euclidean Jordan Algebras. There is also a proof in 
\cite{TT03} via the theory of homogeneous cones, see Theorem 8 therein.
We will give a new proof which we hope is simpler, using only elementary 
properties of Jordan Algebras. Moreover, we will also show 
that $\matRank \stdCone = \ell_{\stdCone} - 1$, which is a new result, as far as 
we know. These results are discussed in Section~\ref{sec:sym}.
We recall that among the symmetric cones we have the second order 
cone, the positive semidefinite cone and direct products of them.

Finally, in Section~\ref{sec:conc}, we discuss what is currently known 
about $\kappa(\stdCone)$ and $\ell _\stdCone$ for three families of 
cones that are of great interest recently: the copositive cone, the completely 
positive cone and the doubly nonnegative cone.
We also observe that the Carath\'{e}odory number of copositive matrices
coincides with the so-called \emph{cp-rank}.

\section{Preliminaries}\label{sec:prel}
Let $\stdCone$ be a closed convex cone contained in $\Re^n$. We denote its dual by 
$\stdCone ^* = \{x \in \Re^n \mid \inProd{x}{y} \geq 0, \forall y \in \stdCone \}$, 
where $\inProd{\cdot}{\cdot}$ is some inner product in $\Re^n$. Given a closed convex 
set $C$, we will denote  its dimension, interior, recession cone, relative interior and relative boundary by 
$\dimSpace C,  \interior{C}, \reCone C, \reInt C, \bd{C}$, respectively.
Recall that we have $\bd{C} = C \setminus \reInt C$.
 If $\stdFace \subseteq C$ is a convex set, we say that $\stdFace$ is a \emph{face} if 
the following condition holds: if $x,y \in C$ and $\alpha x + (1-\alpha)y \in \stdFace$ for 
some $0 < \alpha < 1$, then $x,y \in \stdFace$.

A face consisting of a single point is called an \emph{extreme point}. If $\stdCone$ is a pointed
closed convex cone, the only extreme point is zero. 
We refer to an one-dimensional face of $\stdCone$ as an \emph{extreme ray}. 
 
If $\stdFace _1$ and $\stdFace _2$ are  two faces of $C$, then 
$\reInt \stdFace _1 \cap \reInt \stdFace _2 \neq \emptyset$ if and only 
if $\stdFace _1 = \stdFace _2$, see Proposition~2.2 in \cite{pataki_handbook}. Now, suppose that $\stdFace _1 \subseteq \stdFace _2$. 
Then, a 
necessary and sufficient condition for the inclusion to be proper 
({\it i.e.},  $ \stdFace _1 \subsetneq \stdFace_2$ ) is that 
$\dim \stdFace _1 < \dim \stdFace _2$. See Corollaries~18.1.2 and 18.1.3 in \cite{rockafellar}.
The following result will also be useful.

\begin{lemma}[Theorem 6.4 in \cite{rockafellar}]\label{lemma:extend}
Let $\stdCone$ be a convex cone, $w \in \reInt \stdCone$ and $v \in \stdCone$.
Then, there is $\alpha > 1$ such that $\alpha w + (1-\alpha)v \in \stdCone$.
\end{lemma}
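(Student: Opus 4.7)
The plan is to derive the conclusion directly from the defining property of the relative interior, using the elementary identity
\[
\alpha w + (1 - \alpha) v \;=\; w + (\alpha - 1)(w - v),
\]
which exhibits the target point as a displacement of $w$ in the direction $w - v$, scaled by the positive quantity $\alpha - 1$. The task then reduces to showing that a sufficiently small such displacement keeps us inside $\stdCone$.

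First I would observe that, since $\stdCone$ is a convex cone, $0 \in \stdCone$, and hence $\affineHull \stdCone$ is a linear subspace of $\Re^n$ containing both $w$ and $v$; in particular, $w - v \in \affineHull \stdCone$. Second, because $w \in \reInt \stdCone$, there exists $\epsilon > 0$ such that every point of $\affineHull \stdCone$ within Euclidean distance $\epsilon$ of $w$ lies in $\stdCone$. Third, I would choose $\alpha > 1$ close enough to $1$ that $(\alpha - 1)\norm{w - v} < \epsilon$ (the case $v = w$ being trivial for any $\alpha > 1$). Then the point $w + (\alpha - 1)(w - v)$ belongs to $\affineHull \stdCone$ and lies within distance $\epsilon$ of $w$, hence belongs to $\stdCone$, which is exactly the required conclusion.

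There is no substantive obstacle; the lemma is essentially a reformulation of the standard characterization of the relative interior specialized to cones. The only point that warrants a moment of care is the justification that $w - v \in \affineHull \stdCone$: in the cone setting this is immediate from $0 \in \stdCone$, so $\affineHull \stdCone$ is a subspace closed under differences, but it is worth flagging because the analogous statement for a general convex set would need to be phrased with slightly more care.
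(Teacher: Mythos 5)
Your proof is correct. The paper does not actually prove this lemma---it simply cites Theorem~6.4 of Rockafellar---and your argument is the standard direct one: write $\alpha w + (1-\alpha)v = w + (\alpha-1)(w-v)$, note $w-v \in \affineHull\stdCone$ because the affine hull of a convex cone is a linear subspace, and use the $\epsilon$-ball characterization of $w \in \reInt\stdCone$ relative to $\affineHull\stdCone$. The only cosmetic remark is that under conventions where a cone need not contain the origin you should argue $0 \in \affineHull\stdCone$ (e.g.\ $0 = 2\cdot\tfrac{1}{2}x - x$ is an affine combination of points of $\stdCone$) rather than $0 \in \stdCone$; in this paper the cones do contain $0$, so the point is moot.
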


Let $d \in \stdCone$, then the generalized 
eigenvalue function of $\stdCone$ with respect to $d$ is 
\begin{equation*}
\eig{d}{\stdCone}(x) = \inf \{t \mid x - td \not \in \stdCone \}.
\end{equation*}
This function was introduced by Renegar in \cite{R15}.
We remark that when $\stdCone = \PSDcone{n}$ is the cone of positive semidefinite matrices and 
$d = I_n$ is the identity matrix, then $\eig{I_n}{\PSDcone{n}}$ is the usual  
minimum eigenvalue function. 
While in \cite{R15} the reference point $d$ is always a relative interior of 
$\stdCone$, an important twist here is that we will allow $d$ to be a relative boundary point of $\stdCone$.

We now collect a few properties of $\eig{d}{\stdCone}(x)$. 
\begin{lemma}\label{lemma:aux}
Let $d \in \stdCone$ with $d \neq 0$ and $x \in \Re^n$. The following assertions hold.
\begin{enumerate}[$(i)$]
\item If $x \in \stdCone $ then $\eig{d}{\stdCone}(x) < +\infty$.
\item If $x \in \reInt \stdCone$ then $ \eig{d}{\stdCone}(x) > 0$.
\item For $\alpha \in \Re$, we have $\eig{d}{\stdCone}(x+\alpha d) = \eig{d}{\stdCone}(x) + \alpha $.
\item If $x \in \stdCone $, then $x - \eig{d}{\stdCone}(x)d \in \stdCone \setminus {\reInt \stdCone}$.
\end{enumerate}
\end{lemma}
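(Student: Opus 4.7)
The plan is to dispatch the four items in order, treating (iii) as a direct calculation, (ii) as an application of the extension lemma already stated, (i) as a pointedness argument, and (iv) as a combination of (ii), (iii), and the closedness of $\stdCone$.

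For (i) I would argue by contradiction. Suppose $x - td \in \stdCone$ for all $t > 0$. Dividing by $t$ gives $x/t - d \in \stdCone$, and letting $t \to \infty$ the closedness of $\stdCone$ yields $-d \in \stdCone$. Since $d \in \stdCone$ and $\stdCone$ is pointed (the standing assumption of the paper), this forces $d = 0$, contradicting $d \neq 0$. Hence the set $\{t \mid x - td \notin \stdCone\}$ is nonempty and its infimum is finite. For (ii), I would apply Lemma~\ref{lemma:extend} with $w = x \in \reInt \stdCone$ and $v = d \in \stdCone$ to obtain some $\alpha > 1$ with $\alpha x + (1-\alpha)d \in \stdCone$. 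Dividing by $\alpha > 0$ gives $x - \tfrac{\alpha - 1}{\alpha}d \in \stdCone$, which shows that $\eig{d}{\stdCone}(x) \geq \tfrac{\alpha - 1}{\alpha} > 0$.

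For (iii) I would perform the change of variables $s = t - \alpha$ directly in the defining infimum:
\[
\eig{d}{\stdCone}(x + \alpha d) \;=\; \inf\{t \mid x - (t - \alpha)d \notin \stdCone\} \;=\; \alpha + \inf\{s \mid x - sd \notin \stdCone\} \;=\; \alpha + \eig{d}{\stdCone}(x).
\]
For (iv) let $\lambda = \eig{d}{\stdCone}(x)$. Because $\stdCone$ is closed, the set $\{t \in \Re \mid x - td \in \stdCone\}$ is closed, so its complement is open and does not contain its infimum $\lambda$; therefore $x - \lambda d \in \stdCone$. If in addition $x - \lambda d$ were in $\reInt \stdCone$, then (ii) applied to $x - \lambda d$ would yield $\eig{d}{\stdCone}(x - \lambda d) > 0$, whereas (iii) gives $\eig{d}{\stdCone}(x - \lambda d) = \eig{d}{\stdCone}(x) - \lambda = 0$, a contradiction. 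Hence $x - \lambda d \in \stdCone \setminus \reInt \stdCone$.

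There is no substantial obstacle: each item is a short deduction. The only points that require a little care are invoking pointedness in (i) (so that no recession direction of $\stdCone$ lies along $-d$), making sure the application of Lemma~\ref{lemma:extend} in (ii) produces a strictly positive shift, and using closedness of $\stdCone$ in (iv) to guarantee that the infimum $\lambda$ is attained before chaining (ii) and (iii) for the contradiction.
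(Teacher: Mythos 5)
Your proof is correct and follows essentially the same route as the paper: item $(i)$ via pointedness (you just make the recession-direction limit explicit), item $(ii)$ via Lemma~\ref{lemma:extend} with the same $\frac{\alpha-1}{\alpha}$ bound, item $(iii)$ by translation of the defining infimum, and item $(iv)$ by combining closedness of $\stdCone$ with items $(ii)$ and $(iii)$. No gaps.
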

\begin{proof}
\begin{enumerate}[$(i)$]
\item Suppose that  $\eig{d}{\stdCone}(x) = +\infty$.
Then $x -td \in \stdCone$ for all $t \in \Re$, which implies that $d,-d \in \stdCone$. Since $\stdCone$ is pointed, we have $d = 0$, a contradiction.
\item If $x \in \reInt \stdCone$, by Lemma \ref{lemma:extend}, there exists $\alpha > 1$ such that 
$ \alpha x + (1-\alpha )d \in \stdCone$. This means that  $x -\frac{(\alpha-1)}{\alpha}d \in \stdCone$, 
which implies that $ \eig{d}{\stdCone}(x) \geq \frac{(\alpha-1)}{\alpha} > 0$.
\item It is clear from the definition of $\eig{d}{\stdCone}(x)$. Note that we use the convention that 
$+\infty + \alpha = +\infty$ and $-\infty + \alpha = -\infty$.
\item Due to items $(i)$ and $(iii)$, we have $\eig{d}{\stdCone}(x - \eig{d}{\stdCone}(x)d ) = 0$. By item $(ii)$, 
$x - \eig{d}{\stdCone}(x)d \not \in \reInt \stdCone$. Due to the definition of 
$\eig{d}{\stdCone}(x)$, for every $\epsilon > 0$, 
$$ 
x - (\eig{d}{\stdCone}(x)-\epsilon)d \in \stdCone,
$$ so that $x -\eig{d}{\stdCone}(x)d \in \stdCone$.
\end{enumerate}
\end{proof}

The next lemma is a classical result about the existence of extreme rays. We include  
the proof for the sake of self-containment.
\begin{lemma}\label{lemma:ext}
Let $\stdCone$ be a closed, pointed  convex cone with $\dim \stdCone \geq 1$.
Then $\stdCone$ contains at least one extreme ray, {\it i.e.}, an one dimensional face.
\end{lemma}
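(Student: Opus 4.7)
The natural plan is induction on $n := \dim \stdCone$, using Lemma~2 as the workhorse.

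For the base case $n=1$, the cone $\stdCone$ is itself a one-dimensional face, hence an extreme ray, so there is nothing to do.

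For the inductive step ($n \geq 2$), my plan is to produce a proper face of $\stdCone$ of dimension at least $1$ and then apply induction to it. To produce such a face, I would fix any nonzero $d \in \stdCone$ and choose $x \in \reInt \stdCone$ that is not a scalar multiple of $d$; this is possible because $\dim \stdCone \geq 2$ forces $\reInt \stdCone$ to be at least two-dimensional (and in particular nonempty by a standard convex analysis fact). Then I would set
\[
y \;:=\; x - \eig{d}{\stdCone}(x)\,d.
\]
By Lemma~2$(iv)$ we have $y \in \stdCone \setminus \reInt \stdCone$, and $y \neq 0$ because otherwise $x$ would be collinear with $d$. Letting $F$ be the minimal face of $\stdCone$ containing $y$, the fact that relative interiors of distinct faces are disjoint (as recalled in Section~2) forces $F \subsetneq \stdCone$, so $\dim F < n$, while $y \in F \setminus \{0\}$ gives $\dim F \geq 1$.

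Since $F$ is a closed, pointed convex cone (faces of closed cones are closed, and any subcone of a pointed cone is pointed), the inductive hypothesis applies and yields an extreme ray $R$ of $F$. Finally, because a face of a face is a face, $R$ is also an extreme ray of $\stdCone$, finishing the argument. The only real subtlety I foresee is ensuring $y \neq 0$, which is precisely why the choice of $x$ non-collinear with $d$ is made; everything else reduces to bookkeeping already justified by Lemma~2 and the standard facts cited in Section~2.
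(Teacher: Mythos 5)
Your proof is correct, but it follows a genuinely different route from the paper's. The paper picks $\stdInt^* \in \reInt \stdCone^*$, shows that the base $C = \{x \in \stdCone \mid \inProd{x}{\stdInt^*} = 1\}$ is compact because its recession cone is $\{0\}$ (using pointedness), and then invokes the Krein--Milman theorem to obtain an extreme point of $C$, whose generated half-line is the desired extreme ray. You instead induct on $\dim \stdCone$: you subtract off $\eig{d}{\stdCone}(x)\,d$ from a relative interior point $x$ chosen non-collinear with $d$, land on a nonzero point of $\stdCone \setminus \reInt \stdCone$ by Lemma~\ref{lemma:aux}$(iv)$, pass to its minimal face (proper, of dimension between $1$ and $\dim\stdCone - 1$), and recurse; the base case and the facts that faces of closed pointed cones are closed pointed cones and that a face of a face is a face close the argument. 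All the steps check out — in particular the non-collinearity of $x$ and $d$ is exactly what guarantees $y \neq 0$, and there is no circularity since Lemma~\ref{lemma:aux} does not depend on this lemma. What each approach buys: the paper's argument is shorter and leans on a classical existence theorem, and the compact base it constructs resonates with Proposition~\ref{prop:cone_base}; your argument is more elementary and self-contained (no Krein--Milman), and it is essentially a one-step preview of the recursive face-descent used later in the proof of Theorem~\ref{theo:main}, which makes the paper's main construction feel less ad hoc.
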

\begin{proof}
The first step is to pick $\stdInt^* \in \reInt \stdCone^*$ and 
show that
$$
C = \{x \in \stdCone \mid \inProd{x}{\stdInt^*} = 1 \}
$$ is compact. 
Since $C$ is a closed convex set, it suffices to 
show that its recession cone satisfies  $\reCone C = \{0\}$. We have 
$$
\reCone C = \{x \in \stdCone \mid \inProd{x}{\stdInt^*} = 0 \}.
$$ 
However, $x \in \reCone C$ if and only if $x \in (\stdCone^*)^\perp$, due to 
the choice of $\stdInt ^*$\footnote{This is a general fact. Suppose that $\stdCone$ is a convex cone,
$w \in \reInt \stdCone$ and $z \in \stdCone^*$. Then $z \in \stdCone^\perp$ if and only if $\inProd{w}{z} = 0$. To see that, suppose that $\inProd{w}{z} = 0$. Then, given $v \in \stdCone$ we have $\alpha w +(1-\alpha)v \in \stdCone$ for some $\alpha > 1$, 
by Lemma \ref{lemma:extend}. Taking the inner product with $z$, we see that $\inProd{w}{z}$ must be zero. }. As $(\stdCone^*)^\perp \subseteq \stdCone \cap - \stdCone $ (they are equal, in fact), we have $x = 0$.

Finally, we invoke the Krein-Milman Theorem, which implies that a nonempty compact convex set has at 
least one extreme point $z$. Then, one can verify that the half-line $h_z = \{\alpha z \mid \alpha \geq 0 \}$ 
is an extreme ray
of $\stdCone$.
\end{proof}

\section{Main result and discussion}\label{sec:main}
In what follows, we will denote by $\minFacePoint{S}{\stdCone}$ the minimal 
face of $\stdCone$ that contains $S \,(\subset \stdCone)$.
We also write $\minFacePoint{x}{\stdCone}$ when $S = \{x\}$.
Given a face $\stdFace$, we have $\stdFace = \minFacePoint{x}{\stdCone}$ 
if and only if $x \in \reInt \stdFace$, see Proposition~2.2 in \cite{pataki_handbook}.

A \emph{chain of faces of $\stdCone$} is a finite sequence of faces of $\stdCone$ 
such that each face properly contains the next. If we have a chain 
$\stdFace _1 \supsetneq \ldots \supsetneq \stdFace _\ell$, we define 
its \emph{length} as the  number of faces, which in this case is $\ell$. We will denote by $\ell _\stdCone$, the 
length of the longest chain of faces of $\stdCone$.

\begin{theorem}\label{theo:main}
Let $\stdCone$ be a pointed closed convex cone and $x \in \stdCone$. 
Then $$
\kappa(x) \leq \ell_\minFacePoint{x}{\stdCone} -1.$$
In particular, $\kappa (\stdCone) \leq \ell _\stdCone - 1$.
\end{theorem}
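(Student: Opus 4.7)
The plan is to prove the bound by strong induction on $\ell := \ell_{\minFacePoint{x}{\stdCone}}$, the length of the longest chain of faces of the minimal face of $\stdCone$ containing $x$. The base case $\ell = 1$ forces $\minFacePoint{x}{\stdCone} = \{0\}$ (this face must have no proper face, and for a pointed cone the only such face is $\{0\}$), hence $x = 0$ and $\kappa(x) = 0$, as desired. The second assertion of the theorem follows at once from $\ell_{\minFacePoint{x}{\stdCone}} \leq \ell_\stdCone$ for every $x \in \stdCone$.

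For the inductive step with $\ell \geq 2$, I set $\stdFace := \minFacePoint{x}{\stdCone}$, so that $x \in \reInt \stdFace$ and $\stdFace$ is a pointed closed convex cone with $\dim \stdFace \geq 1$. Applying Lemma~\ref{lemma:ext} to $\stdFace$ furnishes a nonzero $d$ generating an extreme ray of $\stdFace$; since $\stdFace$ is a face of $\stdCone$, this $d$ also lies on an extreme ray of $\stdCone$.

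I then invoke Lemma~\ref{lemma:aux} with the ambient cone taken to be $\stdFace$ itself, and set $\lambda := \eig{d}{\stdFace}(x)$. Part $(ii)$ gives $\lambda > 0$ from $x \in \reInt \stdFace$, while part $(iv)$ gives $y := x - \lambda d \in \stdFace \setminus \reInt \stdFace$. Consequently $y$ belongs to a proper face $\stdFace' \subsetneq \stdFace$ (for instance, the minimal face of $\stdFace$ containing $y$), and this $\stdFace'$ is also a face of $\stdCone$. Prepending $\stdFace$ to any maximal chain in $\stdFace'$ shows $\ell_{\stdFace'} \leq \ell - 1$, and since $\minFacePoint{y}{\stdCone} \subseteq \stdFace'$, the induction hypothesis yields $\kappa(y) \leq \ell - 2$. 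The decomposition $x = \lambda d + y$ then exhibits $x$ as a sum of at most $\ell - 1$ elements on extreme rays of $\stdCone$, completing the induction.

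The step that most deserves care is the legitimacy of invoking Lemma~\ref{lemma:aux} with $\stdFace$ in place of $\stdCone$: one must check that $\stdFace$ inherits the pointed closed convex cone hypothesis used by the lemma, and that the relative interior statements say what the argument needs even though $\stdFace$ need not be full-dimensional in $\Re^n$. Both points are unproblematic—the lemma and its proof are phrased in terms of relative interior throughout—but they should be flagged. The substantive content of the argument is the pairing of the existence of an extreme ray in $\stdFace$ (Lemma~\ref{lemma:ext}) with the ability of the generalized eigenvalue function to peel off one such ray from $x$ while depositing the residue on a strictly smaller face.
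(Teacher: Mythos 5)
Your proof is correct and follows essentially the same route as the paper's: both rely on Lemma~\ref{lemma:ext} to find an extreme ray of the current minimal face and on Lemma~\ref{lemma:aux}$(ii)$,$(iv)$ to peel off a positive multiple of it while landing in a strictly smaller face. The only difference is presentational — the paper runs the peeling step as an explicit finite iteration and counts the resulting chain of faces, whereas you package the same step as a strong induction on $\ell_{\minFacePoint{x}{\stdCone}}$.
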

\begin{proof} 
Let $x \in \stdCone$. If $\dim \minFacePoint{x}{\stdCone} \leq 1$, we are done. Otherwise, 
due to Lemma~\ref{lemma:ext},  $\minFacePoint{x}{\stdCone}$ contains an extreme 
ray $\{\alpha d_1 \mid \alpha \geq 0 \}$. In particular, $d_1 \in \stdCone \setminus\{0\}$.
Now, let 
\begin{align*}
x_1 & = x - \eig{d_1}{\minFacePoint{x}{\stdCone}}(x)d_1\\
\stdFace _1 & = \minFacePoint{x_1}{\stdCone}. 
\end{align*}
Due to item $(iv)$ of Lemma~\ref{lemma:aux}, $x_1 \not \in \reInt \minFacePoint{x}{\stdCone}$, therefore 
$\stdFace _1 \subsetneq \minFacePoint{x}{\stdCone}$. We then proceed by induction, defining 
\begin{align*}
x_{i} & = x_{i-1} - \eig{d_i}{\stdFace _{i-1}}(x_{i-1})d_i,\\
\stdFace _i & = \minFacePoint{x_i}{\stdCone},
\end{align*}
where $\{\lambda d_i \mid \lambda \geq 0 \}$ is an extreme ray of $\stdFace _{i-1}$, 
which exists as long as $\dim \stdFace _{i-1} \geq 1$. Similarly, 
$x_i \not \in \reInt \stdFace _{i-1}$, so that $\stdFace _{i-1} \supsetneq \stdFace _i$.
Because we are in a finite dimensional space, there is an index $\ell$ for which 
 $\stdFace _\ell = \{0\}$, that is, $x_{\ell-1} - \eig{d_{\ell}}{\stdFace _{\ell - 1}}(x_{\ell-1})d_{\ell} = 0$.
 Unwinding the recursion, we can express $x$ as a positive linear combination of $\ell$ points belonging to extreme 
 rays and, at the same time, we obtain a chain of faces
 \begin{equation*}
 \minFacePoint{x}{\stdCone} \supsetneq \stdFace _1 \supsetneq \ldots \supsetneq \stdFace _\ell = \{0\}.
 \end{equation*}
 So that $\ell + 1 \leq \ell _\minFacePoint{x}{\stdCone}$. As any chain of faces of 
 $\minFacePoint{x}{\stdCone}$ is also a chain of $\stdCone$, we also obtain 
 $\ell + 1 \leq \ell _\stdCone$.
 
\end{proof}
Now, let $C$ be a nonempty compact convex set. Similarly, given $x \in C$, we may define the Carath\'{e}odory number of
$x$ as the minimum number $\kappa (x)$ necessary to express $x$ a convex combination of $\kappa (x)$ \emph{extreme points}.
Using Theorem~\ref{theo:main}, we can also say something about the Carath\'{e}odory number for compact convex sets thanks to 
the following well-known result. We include a proof in Appendix \ref{app:prop}.

\begin{proposition}\label{prop:cone_base}
Let $C \subseteq \Re^n$ be a nonempty compact convex set. Let 
$$
\stdCone  = \{(\alpha, \alpha x) \mid \alpha \geq 0, x \in C \}.	
$$
Then
\begin{enumerate}[$(i)$]
	\item $\stdCone $ is a pointed closed convex cone.
	\item Let $\stdFace$ be a face of $\stdCone $ that is not $\{0\}$, then 
	$$
	\stdFace _C = \{ x \in C \mid  (1,x) \in \stdFace \}
	$$is a face of $C$. Moreover, $\dim \stdFace _C = \dim \stdFace  - 1$. 
	\item Let $\stdFace _C$ be a face of $C$, then 
	$$
	\stdFace = \{ (\alpha, \alpha x)  \mid \alpha \geq 0, x \in \stdFace _C \}
	$$is a face of $\stdCone$. Moreover, $\dim \stdFace = \dim \stdFace _C + 1$.
	
\end{enumerate}
\end{proposition}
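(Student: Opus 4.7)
The plan is to treat the three parts in order, with the bulk of the work falling on closedness in (i) and on the dimension bookkeeping in (ii)--(iii).

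For (i), convexity, pointedness, and invariance under nonnegative scaling are routine from the parametric description. For closedness, I take a sequence $(\alpha_n, \alpha_n x_n) \to (\alpha, y)$ with $x_n \in C$ and split into cases: if $\alpha > 0$, then eventually $\alpha_n > 0$ and $x_n = (\alpha_n x_n)/\alpha_n \to y/\alpha$, which lies in $C$ by closedness, so $(\alpha, y) \in \stdCone$; if $\alpha = 0$, then $\|\alpha_n x_n\| \leq \alpha_n \max_{c \in C} \|c\|$, which tends to $0$ by compactness of $C$, forcing $y = 0$ and $(0,0) \in \stdCone$.

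For (ii), I first record that any nonempty face of a cone contains the origin and is itself a cone: these follow by applying the face condition to decompositions such as $y = \tfrac{1}{2}(2y) + \tfrac{1}{2} \cdot 0$, combined with convexity to handle scaling down. Since $\stdFace \neq \{0\}$, it contains some $(\alpha_0, \alpha_0 x_0)$ with $\alpha_0 > 0$; rescaling by $1/\alpha_0$ places $(1, x_0) \in \stdFace$, so $\stdFace_C$ is nonempty. The face property of $\stdFace_C$ in $C$ transfers via the lift $x \mapsto (1, x)$: a convex combination $x = (1-t)y + tz$ in $\stdFace_C$ lifts to $(1-t)(1,y) + t(1,z) = (1,x) \in \stdFace$, and the face condition on $\stdFace$ drags $(1,y), (1,z)$ into $\stdFace$, hence $y, z \in \stdFace_C$. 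For the dimension, the key structural identity is $\stdFace = \{(\alpha, \alpha x) \mid \alpha \geq 0,\, x \in \stdFace_C\}$, which holds because every element of $\stdFace \subseteq \stdCone$ has the form $(\alpha, \alpha x)$ with $x \in C$, and for $\alpha > 0$ rescaling places $(1,x) \in \stdFace$, hence $x \in \stdFace_C$; the case $\alpha = 0$ is covered since $0 \in \stdFace$. Given an affine basis $x_0, x_0 + v_1, \ldots, x_0 + v_d$ for $\stdFace_C$ with $d = \dim \stdFace_C$, the vectors $(1, x_0), (0, v_1), \ldots, (0, v_d)$ are linearly independent and span $\spanVec \stdFace = \affineHull \stdFace$, yielding $\dim \stdFace = d + 1$.

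For (iii), I verify the face condition directly on a convex combination $(1-t)(\alpha_1, \alpha_1 x_1) + t(\alpha_2, \alpha_2 x_2) = (\beta, \beta y)$ that lands in $\stdFace$. If $\beta = 0$ then $\alpha_1 = \alpha_2 = 0$ and both summands equal $0 \in \stdFace$. If $\beta > 0$ and both $\alpha_i > 0$, then $y$ is a convex combination of $x_1, x_2 \in C$ with strictly positive weights summing to $1$, and the face property of $\stdFace_C$ forces $x_1, x_2 \in \stdFace_C$, so both lifted summands lie in $\stdFace$. If instead exactly one $\alpha_i$ vanishes, say $\alpha_1 = 0 < \alpha_2$, then $y = x_2 \in \stdFace_C$ follows directly and the other summand is $0 \in \stdFace$. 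The dimension formula follows by the same spanning argument as in (ii). The step I expect to demand the most care is the structural identity in (ii), since it rests on two complementary facts about faces of cones -- they contain $0$ and are closed under nonnegative scaling -- and keeping these invariants visible is what makes the rest of the argument routine.
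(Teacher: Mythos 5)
Your proof is correct and follows essentially the same route as the paper: the same case split on $\alpha^*$ for closedness, the same lifting of convex combinations through $x \mapsto (1,x)$ for the face properties, and an equivalent basis-counting argument for the dimensions (the paper compares affinely independent points of $\stdFace_C$ with linearly independent points of $\stdFace$ in both directions, which amounts to your structural identity $\stdFace = \{(\alpha,\alpha x) \mid \alpha \geq 0,\, x \in \stdFace_C\}$). If anything, your part $(iii)$ is slightly more complete, since the paper only writes out the case where both $\alpha_1$ and $\alpha_2$ are positive and leaves the degenerate cases implicit.
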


In a similar fashion, we will define $\ell _C$ as the 
length of the longest chain of faces in $C$.

\begin{theorem}
Let $C$ be a nonempty compact convex set and $x \in C$.
Then,
$$
\kappa (x) \leq \ell _{C}.
$$
\end{theorem}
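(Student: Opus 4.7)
The plan is to reduce this to the conic bound of Theorem~\ref{theo:main} via the cone-over-$C$ construction of Proposition~\ref{prop:cone_base}. Let $\stdCone$ be the pointed closed convex cone associated with $C$, and apply Theorem~\ref{theo:main} to the lifted point $(1,x) \in \stdCone$. This yields a representation
\[
(1,x) = d_1 + \cdots + d_k,
\]
where $k \leq \ell_\stdCone - 1$ and each $d_i$ lies on an extreme ray of $\stdCone$.

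Next, I would translate this decomposition back into $C$. By Proposition~\ref{prop:cone_base}(ii), any face of $\stdCone$ distinct from $\{0\}$ has dimension exactly one greater than its image face in $C$, so every extreme ray (one-dimensional face) of $\stdCone$ corresponds to a zero-dimensional face of $C$, that is, a singleton $\{y_i\}$ with $y_i$ an extreme point of $C$. Hence each $d_i$ has the form $(\alpha_i, \alpha_i y_i)$ for some $\alpha_i \geq 0$. Reading off the first coordinate of the equation gives $\sum_i \alpha_i = 1$, and the remaining coordinates give $x = \sum_i \alpha_i y_i$; discarding the terms with $\alpha_i = 0$ exhibits $x$ as a convex combination of at most $k$ extreme points of $C$, so $\kappa(x) \leq k$.

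Finally, I would relate $\ell_\stdCone$ and $\ell_C$. Since $\stdCone$ is pointed, $\{0\}$ is a face, and any chain of faces of $\stdCone$ not already ending at $\{0\}$ can be extended by appending $\{0\}$; hence the longest chain in $\stdCone$ has the form $\stdCone \supsetneq \stdFace_2 \supsetneq \cdots \supsetneq \stdFace_{\ell_\stdCone - 1} \supsetneq \{0\}$. Applying the bijection between nonzero faces of $\stdCone$ and faces of $C$ from Proposition~\ref{prop:cone_base}(ii)--(iii) to the $\ell_\stdCone - 1$ nonzero members yields a chain of $\ell_\stdCone - 1$ faces of $C$, so $\ell_C \geq \ell_\stdCone - 1$; running the construction in the other direction gives the reverse inequality, hence $\ell_\stdCone = \ell_C + 1$. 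Combining with the previous step produces $\kappa(x) \leq k \leq \ell_\stdCone - 1 = \ell_C$, as desired. I expect no genuine obstacle: the argument is essentially a routine translation through the cone--base correspondence, and the only point requiring care is the bookkeeping that shifts dimensions and chain lengths by exactly one when passing between $C$ and $\stdCone$.
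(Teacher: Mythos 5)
Your proposal is correct and follows essentially the same route as the paper: lift $x$ to $(1,x)$ in the cone over $C$, apply Theorem~\ref{theo:main} there, use Proposition~\ref{prop:cone_base} to identify extreme rays of $\stdCone$ with extreme points of $C$, and establish $\ell_\stdCone = \ell_C + 1$ via the chain correspondence. The only cosmetic difference is that the paper proves the full equality $\kappa(x) = \kappa((1,x))$ while you only verify the one inequality actually needed.
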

\begin{proof}
Let $\stdCone$ be as in Proposition~\ref{prop:cone_base}, then the first step is showing that
$$
\kappa (x) = \kappa ( (1,x) ),
$$
where it is understood that $\kappa (1,x)$ is computed with respect to $\stdCone$. Suppose that 
\begin{equation}\label{eq:aux1}
(1,x) = (\alpha _1, \alpha _1 x_1) + \ldots + (\alpha _\ell, \alpha _\ell x_\ell)
\end{equation}
where each $(\alpha _i, \alpha _i x_i)$ lies in an extreme ray of $\stdCone$ and the $\alpha _i$ are positive.
Due to item $(ii)$ of Proposition~\ref{prop:cone_base}, it must be the case that the $x_i$ are 
extreme points of $C$. So Equation \eqref{eq:aux1} also expresses $x$ as a convex combination of 
$\ell$ extreme points. 
Conversely, if $x$ is expressed as a convex combination of $\ell$ extreme points, it is  also possible to express $(1,x)$ as a sum of $\ell$ extreme rays as in Equation \eqref{eq:aux1}.

If we show that $\ell _\stdCone = \ell _C + 1$, then the result will follow by Theorem~\ref{theo:main}.
To show that this is indeed the case, consider an arbitrary chain of faces of $C$
$$
\stdFace^1 _C \supsetneq \ldots \supsetneq \stdFace^\ell _C. 
$$
Then, following Proposition~\ref{prop:cone_base}, we also obtain a chain of faces of $\stdCone$ and we 
can enlarge the chain by adding the zero face.
$$
\stdFace^1 \supsetneq \ldots \supsetneq \stdFace^\ell  \supsetneq \{0\}. 
$$
This process can be reversed and any chain of faces of $\stdCone$ that does not contain the 
zero face also gives rise to a chain of faces in 
$C$.
\end{proof}

\section{Tightness of the bound}\label{sec:tight}
The main result of this section gives a few conditions ensuring that 
$\kappa (\stdCone) = \ell _{\stdCone} - 1$. We will also furnish an example 
where the bound fails to be tight. 

A pointed closed convex cone $\stdCone$ is said to be \emph{strictly convex}  if $\interior{ \stdCone } \ne \emptyset$
and we have $\stdFace(\{x, y\}, \stdCone) = \stdCone$ for every linearly independent $x,y \in \bd{ \stdCone }$, see \cite{barker73} and Definition 2.A.4 in \cite{barker81}. An equivalent concept is the notion of \emph{smooth cones}, which was considered in \cite{LP15}: $\stdCone$ is a smooth cone if $\interior{ \stdCone } \ne \emptyset$
and every face of $\stdCone$ different from $\{0\}$ and $\stdCone$ is an extreme ray.\footnote{
	In fact, if $\stdCone$ is a strictly convex cone, then any face $\stdFace$ with $\{0\} \subsetneq \stdFace \subsetneq \stdCone$ is one dimensional because every $x, y \in \stdFace $ cannot be linearly independent, 
	since $\stdFace \subset \bd{\stdCone}$. If $\stdCone$ is a smooth cone, on the other hand, every linearly independent $x, y \in \bd{ \stdCone }$ must satisfy $\stdFace(\{x, y\}, \stdCone) = \stdCone$ since we have $\dim{ \stdFace(\{x, y\}, \stdCone) } \geq 2$.
}
It is known that every strictly convex cone $\stdCone$ with $\dim\stdCone \geq 2$ satisfies $\kappa(\stdCone) = 2$, see Lemma~4.1 in  \cite{TX01}. In what follows, we give a new proof 
and we will point out the connection  to $\ell_ \stdCone$.

\begin{theorem}\label{theo:bounds}
Let $\stdCone$ be a pointed closed convex cone.
\begin{enumerate}[$(i)$]

\item If the set of extreme rays of $\stdCone$ is countable, then we have
\begin{equation}\label{eq:tight1}
\kappa (\stdCone) = \ell _{\stdCone} - 1 = \dim \stdCone.
\end{equation}
In particular, \eqref{eq:tight1} holds when $\stdCone$ is polyhedral.

\item If $\stdCone$ is a strictly convex cone with $\dim\stdCone \geq 2$, then we have
$$
\kappa (\stdCone) = \ell _{\stdCone} - 1 = 2.
$$
\end{enumerate}
\end{theorem}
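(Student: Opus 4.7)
The plan is to handle the two parts separately, noting that in both cases Theorem~\ref{theo:main} together with the standard inequality $\ell_{\stdCone} - 1 \leq \dim\stdCone$ (which follows from the fact that proper face inclusions strictly decrease dimension, as recalled in Section~\ref{sec:prel}) immediately gives the upper bounds. So the work is to produce matching lower bounds on $\kappa(\stdCone)$.

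For part $(i)$, I would first show $\kappa(\stdCone) \geq \dim\stdCone$ by a dimension-counting argument. Let $n = \dim\stdCone$ and enumerate the extreme rays as $\{R_i\}_{i \in \mathbb{N}}$; assume for contradiction that $\kappa(\stdCone) = k < n$. Then every element of $\stdCone$ lies in some set of the form $R_{i_1} + \cdots + R_{i_k}$, and each such set is contained in the $k$-dimensional linear subspace $V_{i_1,\ldots,i_k} = \spanVec(R_{i_1} \cup \cdots \cup R_{i_k})$. Thus $\stdCone$ is contained in a countable union of $k$-dimensional linear subspaces of $\spanVec\,\stdCone \cong \Re^n$. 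Since $\reInt\stdCone \neq \emptyset$ in $\spanVec\,\stdCone$, this contradicts the Baire category theorem (or, equivalently, a Lebesgue measure argument), as a countable union of proper subspaces of $\Re^n$ has empty interior. This forces $\kappa(\stdCone) \geq n$, and combined with the chain $\kappa(\stdCone) \leq \ell_{\stdCone} - 1 \leq \dim\stdCone = n$, yields \eqref{eq:tight1}. The polyhedral case is then immediate since a polyhedral cone has only finitely many extreme rays.

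For part $(ii)$, I would first compute $\ell_{\stdCone}$ using the smooth/strictly convex characterization: every face of $\stdCone$ other than $\{0\}$ and $\stdCone$ itself is an extreme ray. Hence any maximal chain has the form $\stdCone \supsetneq R \supsetneq \{0\}$ for some extreme ray $R$, giving $\ell_{\stdCone} = 3$, so by Theorem~\ref{theo:main} $\kappa(\stdCone) \leq 2$. For the lower bound, pick any $x \in \interior{\stdCone}$, which exists because $\dim\stdCone \geq 2$ and the definition of strict convexity requires nonempty interior. Such an $x$ cannot lie on a single extreme ray, since extreme rays are proper faces and hence contained in $\bd{\stdCone}$. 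Therefore $\kappa(x) \geq 2$, giving $\kappa(\stdCone) = 2 = \ell_{\stdCone} - 1$.

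The main obstacle I anticipate is the Baire-category step in $(i)$: one has to be careful to work inside $\spanVec\,\stdCone$ rather than the ambient $\Re^n$ (since $\stdCone$ need not be full-dimensional), and to justify that the hypothesis $k < n$ really forces each of the countably many subspaces $V_{i_1,\ldots,i_k}$ to be proper in $\spanVec\,\stdCone$. Everything else is bookkeeping on top of Theorem~\ref{theo:main}.
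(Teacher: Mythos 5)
Your proposal is correct and follows essentially the same route as the paper: both parts reduce to Theorem~\ref{theo:main} plus a lower bound, with part $(i)$ obtained by observing that $\stdCone$ would otherwise be a countable union of convex sets of dimension $\kappa(\stdCone)<\dim\stdCone$ (the paper rules this out by Lebesgue measure subadditivity rather than Baire category, an immaterial difference), and part $(ii)$ by noting $\ell_\stdCone=3$ and that an interior point cannot lie on a single extreme ray. No gaps.
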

\begin{proof}
	\begin{enumerate}[$(i)$]

		\item
		In view of the definition of $\kappa(\stdCone)$, we can write
		$$
		\stdCone =
			\bigcup
			\{
			\conicHull( \{d_1,\ldots,d_{\kappa(\stdCone)}\} )
			\mid
			d_i \text{ belongs to an extreme ray of } \stdCone,~ \norm{d_i} = 1
			\}
		$$
		where $\conicHull( \{d_1,\ldots,d_{\kappa(\stdCone)}\} )$ denotes the  smallest convex cone containing $\{d_1,\ldots,d_{\kappa(\stdCone)}\} $. Since the set of extreme rays 
		is countable, it follows that $\stdCone$ is a countable union of cones of dimension at most $\kappa(\stdCone)$. 
		
		This forces that $\kappa(\stdCone) \geq \dim{\stdCone}$ since a finite dimensional convex cone
		cannot be covered by a countable union of convex subsets with strictly smaller dimmension\footnote{
			Consider the linear span of $\stdCone$ endowed with the Lebesgue measure $\mu$, which 
			is a vector space of dimension $\dim \stdCone$. Also, recall that convex 
			sets are Lebesgue measurable. As $\stdCone$ contains
			balls of dimension $\dim \stdCone$, we have $\mu(\stdCone) > 0$.
			If $\stdCone$ is covered by a countable collection $\{V_i\}$ of convex sets 
			with dimension strictly smaller than $\dim \stdCone$, we arrive at an contradiction
			$\mu(\stdCone) \leq \sum_{i}\mu(V_i) = 0$, due to the countable subadditivity of $\mu$.
			See  \cite{khare09}, for an algebraic discussion in a more general context.
		}.
		Therefore, Theorem~\ref{theo:main} concludes that $\kappa(\stdCone) = \ell_\stdCone - 1 = \dim\stdCone$.
		
		As an immediate consequence, polyhedral cones satisfy \eqref{eq:tight1} because the set of extreme rays of a polyhedral cone is finite (see, {\it e.g.}, \cite[Theorem 19.1]{rockafellar}).
		
		\item
		The definition of smooth cones implies $\ell_\stdCone \leq 3$ since every chain of faces of $\stdCone$ cannot be longer than the one of the form $\stdCone \supsetneq \stdFace \supsetneq \{0\}$ where $\stdFace$ is an extreme ray.
		Furthermore, it is clear that $\kappa(\stdCone) > 1$ due to $\dim\stdCone \geq 2$. Hence, Theorem~\ref{theo:main} concludes that $\kappa(\stdCone) = \ell_\stdCone - 1 = 2$.
	\end{enumerate}
\end{proof}


We now give a family of examples where $\kappa (\stdCone) < \ell _{\stdCone} - 1$.
\begin{example}
Consider the cone
$$
\stdCone = \left\{ z = (x, t) \in \Re^{n} \times \Re ~\middle|~ t \geq \sqrt{x_1^2 + \cdots + x_n^2}, x_2 \geq 0, \ldots, x_n \geq 0 \right\} \subset \Re^{n+1}.
$$
(Notice that the component $x_1$ is allowed to be negative.)
Then, for $n\geq 1$ we have
$$
\kappa (\stdCone) = 2, \quad \ell _{\stdCone} = n + 2.
$$
\end{example}
\begin{proof}
We remark that  $\stdCone$ is the intersection of the second order cone $\SOC{n+1} = \{(x, t) \mid t \geq~ \sqrt{x_1^2 + \cdots x_n^2}\}$ and
the cone $\stdCone' := \{ (x, t) \mid x_2 \geq 0, \ldots, x_n \geq 0 \}$. 

Let us show that $\kappa (z) \leq 2$ for any $z \in \stdCone$. For $z = (x, t) \in \stdCone$, define
$$
\lambda_{\pm} := \pm \left( - x_1 + \sqrt{ t^2 - (x_2^2 + \cdots + x_n^2) } \right)
$$
so that
\begin{equation}
\sqrt{ (x_1 \pm \lambda_{\pm})^2 + x_2^2 + \cdots + x_n^2 } = t.\label{eq:ex}
\end{equation}
Now, let $e_1$ denote the unit vector along the first coordinate.
Equation \eqref{eq:ex} implies that the points $d_{\pm} := z \pm \lambda_{\pm} e_1 $ belong to both 
$\stdCone$ and the boundary of $\SOC{n+1}$. Since every boundary point of $\SOC{n+1}$ lies in an extreme ray of 
$\SOC{n+1}$ (see Example 2.6 in \cite{pataki_handbook}), we conclude that the points $d_{\pm}$ must belong to extreme rays of $\stdCone$ as 
well.
Since $z$ lies on the segment between $d_-$ and $d_+$, we have $\kappa (z) \leq 2$. As $\stdCone$ 
is not a single extreme ray, we must have $\kappa (\stdCone) = 2$.

Finally, we verify $\ell_{\stdCone} = n + 2$ as follows.
Define the faces $\{\stdFace_i\}_{i=1}^{n+2}$ of $\stdCone$ by $\stdFace_1 := \stdCone$,
\begin{eqnarray*}
	\stdFace_{i+1} &:=& \{ (x, t) \in \stdCone' \mid x_2 = \cdots = x_{i+1} = 0\} \cap \SOC{n+1}, \quad i = 1, \ldots, n-1, \\
	\stdFace_{n+1} &:=& \{ (x, t) \in \stdCone' \mid x_2 = \cdots = x_n = 0\} \cap \{(x,t) \in \SOC{n+1} \mid x_1 = t \geq 0 \},
\end{eqnarray*}
and $\stdFace_{n+2} := \{0\}$, which gives a chain of faces of $\stdCone$ of length $n+2$. Note 
that these are indeed faces of $\stdCone$, since they arise as intersections of faces of $\stdCone'$ 
and $\SOC{n+1}$.
Since $\stdCone$ is contained in a space of dimension $n+1$, it  must be indeed the largest possible 
chain.
\end{proof}

\section{The symmetric cone case}\label{sec:sym}

We say that $\stdCone$ is a \emph{symmetric cone} if $\stdCone = \stdCone^*$, $\interior \stdCone \neq \emptyset$ and for every pair of elements 
$x,y$ in the interior of $\stdCone$ there is an invertible linear transformation $T$ such that 
$T(\stdCone) = \stdCone$ and $T(x) = T(y)$. The theory of symmetric cones is strongly connected with the study of 
\emph{Euclidean Jordan Algebras}. The default reference is the book by Faraut and Kor\'{a}nyi \cite{FK94} but 
there are many introductory accounts  in the context of optimization, see \cite{Sturm2000, Faybusovich2008}. 
In this section, we will furnish in Theorem~\ref{theo:sym_car} another proof that $\kappa (x) = \matRank x$. 
Moreover in Theorem~\ref{theo:sym_long}, we will show 
that $\ell _{\stdCone} = \matRank \stdCone + 1$, which is a new result as far as we know.
The reader who is already familiar with the theory of Jordan Algebras can skip to Section~\ref{sec:car_sym}.

\subsection{Preliminaries}
Let $\jAlg$ be a finite dimensional real vector space equipped with a bilinear form $\jProd{}{} :\jAlg\times \jAlg \to \jAlg$. 
Write $x^2$ for $\jProd{x}{x}$. Now, suppose that $\jProd{}{}$ satisfies the following properties for all $x,y \in \jAlg$:
\begin{enumerate}
	\item $\jProd{x}{y} = \jProd{y}{x}$,
	\item $\jProd{x}({\jProd{x^2}{y}}) = \jProd{x^2}({\jProd{x}{y}})$.
\end{enumerate}
Then, $(\jAlg,\jProd{}{})$ is said to be a \emph{Jordan Algebra} and 
$\jProd{}{}$ is said to be a \emph{Jordan product}.

Furthermore, suppose that $\jAlg$ is equipped with an inner product $\inProd{\cdot}{\cdot}$ such that 
for all $x,y,z \in \jAlg$ we have
$$
\inProd{\jProd{x}{y}}{z} = \inProd{y}{\jProd{x}{z}}.
$$
Then, $(\jAlg,\jProd{}{})$ is said to be an \emph{Euclidean Jordan Algebra}.
Given a Jordan algebra, we define its cone of squares as
$$
\stdCone = \{ \jProd{x}{x} \mid x \in \jAlg \}.
$$
\begin{example}
Let $\S^n$ denote the space of $n\times n$ symmetric matrices equipped with the inner product such 
that $\inProd{A}{B} = \tr(AB)$, for $A,B \in \S^n$. Then, $\S^n$ is an Euclidean Jordan 
Algebra with the following product
$$
\jProd{A}{B} = \frac{AB+BA}{2}.
$$
The corresponding cone of squares is the cone of positive semidefinite matrices $\PSDcone{n}$.

Now, consider $\Re^{n+1}$ equipped with the usual Euclidean inner product. Given 
$x \in \Re^n$, write $x = (x_0,\overline{x})$, where $x_0 \in \Re$ and $\overline{x} \in \Re^n$.
Consider the following Jordan Products in $\Re^{n+1}$
\begin{align*}
\jProd{x}{y} = (\inProd{x}{y}, x_0\overline{y} + y_0 \overline{x}).
\end{align*}
The corresponding cone of squares is the second order cone $\SOC{n+1} = \{(x_0,\overline{x}) \mid x_0 \geq \sqrt{\inProd{\overline{x}}{\overline{x}}} \}$.

\end{example}

It is known that $\stdCone$ is a symmetric cone if and only if it arises as the cone of squares induced by 
some Euclidean Jordan Algebra $(\jAlg,\jProd{}{})$, see Theorems~III.2.1 and III.3.1 in \cite{FK94}. 
Moreover, the construction in Theorem III.3.1 shows that we can safely assume that $(\jAlg, \jProd{}{})$ has an identity element $e$. That is, $\jProd{x}{e} = x$ for all $x \in \jAlg$.

One of the beautiful aspects of the theory of Jordan Algebras is that it allows the definition of objects such as eigenvalues,
determinant, trace and rank in a very general context. In particular, we also 
have a version of the Spectral Theorem for Jordan Algebras. In what follows, we will say that $c$ is \emph{idempotent} if 
$\jProd{c}{c} = c$. Morover, $c$ is \emph{primitive} if it is nonzero and there is no way of writing 
$$
c = a+b,
$$
with $a$ and $b$ nonzero idempotent elements satisfying $\jProd{a}{b} = 0$.

\begin{theorem} [Spectral theorem, see Theorem III.1.2 in \cite{FK94}]\label{theo:spec}
	Let $(\jAlg, \jProd{}{} )$ be an Euclidean Jordan Algebra and let $x \in \jAlg$. Then there are:
	\begin{enumerate}
		\item primitive idempotents $c_1, \dots, c_r$ satisfying
		\begin{align}
		\jProd{c_i}{c_j} & = 0, \qquad \text{ for } i \neq j \\
		\jProd{c_i}{c_i} &= c_i, \qquad  i = 1, \ldots, r \\
		c_1 + \ldots + c_r & = e, \qquad i = 1, \ldots, r
		\end{align}
		\item unique real numbers $\lambda _1, \ldots, \lambda _r$ satisfying
		\begin{equation}		
		x = \sum _{i=1}^r \lambda _i c_i \label{eq:dec}.
		\end{equation}
	\end{enumerate}
\end{theorem}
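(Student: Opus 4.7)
The plan is to reduce the spectral theorem to ordinary linear algebra applied to the self-adjoint operator $L(x) : y \mapsto \jProd{x}{y}$ by working inside the commutative associative subalgebra $\Re[x] \subseteq \jAlg$ generated by $x$ and $e$. First I would prove power-associativity: by the Jordan identity and induction, $\Re[x]$ is associative, so powers $x^k$ and evaluated polynomials $p(x)$ are unambiguous. Finite-dimensionality then gives a smallest integer $r$ for which $\{e,x,\ldots,x^r\}$ is linearly dependent, producing a unique monic \emph{minimal polynomial} $m_x$ of degree $r$ with $m_x(x)=0$.

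The hardest step is to show that $m_x$ splits into \emph{distinct} real linear factors $m_x(t) = \prod_{i=1}^{k}(t-\mu_i)$. Here the Euclidean hypothesis enters in the one essential place: the compatibility $\inProd{\jProd{x}{y}}{z} = \inProd{y}{\jProd{x}{z}}$ says that $L(x)$ is self-adjoint, hence diagonalizable with real eigenvalues, so its operator-theoretic minimal polynomial $\mu_{L(x)}$ has simple real roots. Because $L(x)$ restricted to $\Re[x]$ is ordinary multiplication by $x$, one has $\mu_{L(x)}(x) = \mu_{L(x)}(L(x))e = 0$, so $m_x \mid \mu_{L(x)}$, and therefore $m_x$ inherits simple real roots. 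I expect this identification between $m_x$ and the operator minimal polynomial of $L(x)$ to be the main technical obstacle.

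With the distinct real roots $\mu_1,\ldots,\mu_k$ in hand, I would use Lagrange interpolation polynomials $q_i(t) := \prod_{j\ne i}(t-\mu_j)/(\mu_i-\mu_j)$ and set $f_i := q_i(x) \in \Re[x]$. The standard polynomial identities $\sum_i q_i \equiv 1$, $q_i q_j \equiv 0$ for $i\ne j$, $q_i^2 \equiv q_i$, and $t \equiv \sum_i \mu_i q_i \pmod{m_x}$ transfer through the associativity of $\Re[x]$ to give $\sum_i f_i = e$, $\jProd{f_i}{f_j} = 0$, $\jProd{f_i}{f_i} = f_i$, and $x = \sum_i \mu_i f_i$, a decomposition into mutually orthogonal idempotents with distinct coefficients.

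Finally, each $f_i$ need not be primitive, so I would decompose it further by recursion: whenever $f_i = a + b$ with $a,b$ nonzero orthogonal idempotents, apply the above construction inside the smaller Euclidean Jordan subalgebra $\{ y \in \jAlg \mid \jProd{f_i}{y} = y\}$, which has identity $f_i$ and strictly smaller dimension. Termination produces primitive idempotents $c_1, \ldots, c_r$ whose total count equals the rank of $\jAlg$, and reindexing with the repeated coefficient $\mu_i$ yields $x = \sum_{k=1}^r \lambda_k c_k$ as required. Uniqueness of the multiset $\{\lambda_k\}$ follows from the uniqueness of $m_x$ (hence of the $\mu_i$) together with the intrinsic nature of each $f_i = q_i(x)$ as a polynomial in $x$.
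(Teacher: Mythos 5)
The paper does not prove this statement at all: it is quoted verbatim as Theorem III.1.2 of \cite{FK94}, so there is no internal proof to compare against. Your sketch in fact reconstructs the standard argument from that source, and its architecture is sound. Power-associativity makes $\Re[x]$ a commutative associative subalgebra, so the minimal polynomial $m_x$ is well defined; the Euclidean axiom says exactly that $L(x)$ is self-adjoint; the induction $L(x)^k e = x^k$ gives $m_x \mid \mu_{L(x)}$, so $m_x$ has simple real roots; Lagrange interpolation yields the complete system of orthogonal idempotents $f_i = q_i(x)$ with $\sum_i f_i = e$ and $x = \sum_i \mu_i f_i$; and recursion inside $V(f_i,1)$ (a Euclidean Jordan algebra with identity $f_i$, by the Peirce decomposition, which is proved independently, so there is no circularity) refines each $f_i$ to primitive idempotents. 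You correctly single out the simplicity of the roots of $m_x$ as the crux and you handle it correctly.

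The one genuine gap is in the last step. The theorem, as used in the paper, asserts that the frame always has exactly $r$ elements where $r$ depends only on $\jAlg$ (the rank), and that the full list $\lambda_1,\ldots,\lambda_r$, including multiplicities and zeros, is unique. Your recursion produces \emph{some} decomposition of each $f_i$ into orthogonal primitive idempotents, but you have not shown that every such decomposition has the same cardinality; without that, neither the count $r$ nor the multiplicity of each $\mu_i$ in the multiset $\{\lambda_k\}$ is determined. Uniqueness of the distinct values $\mu_i$ and of the grouped idempotents does follow as you say: for any frame decomposition $x=\sum_k \lambda_k c_k$ one has $p(x)=\sum_k p(\lambda_k)c_k$, hence $q_i(x)=\sum_{\lambda_k=\mu_i}c_k=f_i$. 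But the invariance of the number of primitive idempotents summing to the identity of $V(f_i,1)$ is a separate nontrivial fact, established in \cite{FK94} via the generic minimal polynomial and the well-definedness of rank; you should either cite it or prove it before the statement ``whose total count equals the rank of $\jAlg$'' is justified.
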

We say that the $c_1, \ldots , c_r$ in Theorem~\ref{theo:spec} form a  \emph{Jordan Frame} for $x$. The 
$\lambda _1, \ldots, \lambda _r$ are the eigenvalues of $x$.
We 
remark that $r$ only depends on the algebra $\jAlg$. 
Given $x \in \jAlg$, we define its trace by
$$
\tr (x) = \lambda _1 + \ldots + \lambda _r,
$$
where $\lambda _1, \ldots, \lambda _r$ are the eigenvalues of 
$x$. As in the case of matrices, it turns out that the trace function 
is  linear, see Proposition~II.4.3 in \cite{FK94}.

For an element $x \in \jAlg$, we define the \emph{rank} of $x$ as the number of nonzero $\lambda _i$ that appear 
in the Equation \eqref{eq:dec}. Then, the rank of $\stdCone$ is 
$$
\matRank \stdCone = \max \{ \matRank x \mid x \in \stdCone \} = \tr (e).
$$


For the next theorem, we need the following notation. Given $x\in \jAlg$ and $a \in \Re$, we write 
$$
V(x,a) = \{z \in \jAlg \mid \jProd{x}{z} = az \}.
$$

\begin{theorem}[Peirce decomposition, see Proposition IV.1.1 in \cite{FK94}]\label{theo:peirce1}
	Let $c \in \jAlg$ be an idempotent. Then $\jAlg$ decomposes itself as an orthogonal direct sum as follows.
	$$
	\jAlg = V(c,1) \bigoplus V\left(c,\frac{1}{2}\right) \bigoplus V(c,0).
	$$	
	In addition, $V(c,1)$ and $V(c,0)$ are  Euclidean Jordan Algebras satisfying 
	$\jProd{V(c,1)}{V(c,0)} = \{0\}$.
\end{theorem}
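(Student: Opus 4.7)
The proof treats the statement as a spectral decomposition of the multiplication operator $L(c)\colon \jAlg \to \jAlg$ defined by $L(c)(z) = \jProd{c}{z}$. First, the associativity of the inner product gives $\inProd{\jProd{c}{y}}{z} = \inProd{y}{\jProd{c}{z}}$, so $L(c)$ is self-adjoint. Consequently $L(c)$ is orthogonally diagonalizable with real eigenvalues, and $\jAlg$ decomposes as the orthogonal direct sum of its eigenspaces $V(c,a) = \ker(L(c) - aI)$.

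The crux is to prove that the spectrum of $L(c)$ is contained in $\{0, 1/2, 1\}$, or equivalently
\begin{equation*}
2L(c)^3 - 3L(c)^2 + L(c) \;=\; 0.
\end{equation*}
For this, I would polarize the Jordan axiom $[L(x), L(x^2)] = 0$ once in $x$ (substituting $x \mapsto x + ty$ and extracting the coefficient of $t$) to obtain
\begin{equation*}
[L(y), L(x^2)] + 2\,[L(x), L(x \circ y)] = 0.
\end{equation*}
Specializing to $x = c$, where $c^2 = c$, this rearranges to $[L(c), L(c \circ y)] = \tfrac{1}{2}[L(c), L(y)]$. Evaluating both sides at $c$, and using $L(c)c = c$ together with the commutativity of $\circ$, a direct computation yields $L(c)^3 y - \tfrac{3}{2} L(c)^2 y + \tfrac{1}{2} L(c) y = 0$ for every $y \in \jAlg$, which is the desired cubic relation. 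It factors as $L(c)(2L(c) - I)(L(c) - I) = 0$, forcing the spectrum into $\{0, 1/2, 1\}$ and producing the orthogonal decomposition $\jAlg = V(c,0) \oplus V(c, 1/2) \oplus V(c,1)$. I expect this polarization-and-evaluation step to be the main obstacle; everything else reduces to bookkeeping on eigenspaces.

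For the remaining multiplicative claim, I would polarize once more in $x$ to obtain the fully polarized Jordan identity $[L(a), L(b \circ c')] + [L(b), L(a \circ c')] + [L(c'), L(a \circ b)] = 0$. Applying this with $a = c$, $b = y \in V(c,\mu)$, $c' = z \in V(c,\nu)$, and evaluating at $c$, one obtains
\begin{equation*}
L(c)^2(y \circ z) - L(c)(y \circ z) + (\mu - \nu)^2 (y \circ z) \;=\; 0.
\end{equation*}
For $\mu = 1, \nu = 0$ the annihilating polynomial $t^2 - t + 1$ has no roots in $\{0, 1/2, 1\}$, so decomposing $y \circ z$ in eigenspaces of $L(c)$ forces $y \circ z = 0$; this yields $\jProd{V(c,1)}{V(c,0)} = \{0\}$.

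Finally, to see that $V(c,1)$ and $V(c,0)$ are Jordan subalgebras, take $\mu = \nu = 1$ in the same identity, which now gives $y \circ z \in V(c,0) \oplus V(c,1)$; pairing any $u \in V(c,0)$ against the $V(c,0)$-component via $\inProd{u}{y \circ z} = \inProd{u \circ y}{z} = 0$ (using the self-adjointness of $L(u)$ and the Peirce relation just established) shows this component vanishes, so $y \circ z \in V(c,1)$. Since $e - c$ is also idempotent and satisfies $V(e-c,a) = V(c, 1-a)$, the analogous argument applied to $e - c$ yields closure of $V(c,0)$ under $\circ$. Both $V(c,1)$ and $V(c,0)$ then inherit the Jordan axiom and the restricted inner product, making each an Euclidean Jordan Algebra in its own right.
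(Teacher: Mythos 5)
The paper does not prove this theorem; it is imported by citation to Proposition~IV.1.1 of \cite{FK94}, so there is no internal proof to compare against. Your argument is correct and complete, and it is essentially the standard proof from that reference: self-adjointness of $L(c)$ from the associativity of the inner product, the polarized Jordan identity yielding the minimal-polynomial relation $L(c)\bigl(2L(c)-I\bigr)\bigl(L(c)-I\bigr)=0$, and the fully polarized identity giving $L(c)^2(\jProd{y}{z})-L(c)(\jProd{y}{z})+(\mu-\nu)^2(\jProd{y}{z})=0$ for the multiplication rules between Peirce spaces. Two cosmetic points only: the orthogonality step at the end uses self-adjointness of $L(y)$ rather than $L(u)$, and the reduction of $V(c,0)$ to $V(e-c,1)$ uses the identity element, which the paper explicitly allows itself to assume; alternatively the $\mu=\nu=0$ case of your identity handles $V(c,0)$ directly without invoking $e$.
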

The Peirce decomposition can be interpreted as follows. Given an element $x \in \jAlg$, 
we have the linear transformation $L_x:\jAlg \to \jAlg$ given by
$$
L_x(y) = \jProd{x}{y}.
$$
The fact that the algebra is Euclidean implies that $L_x$ is self-adjoint, therefore 
$\jAlg$ decomposes as a direct sum of the eigenspaces. Moreover, if $x$ is an idempotent, 
it can be shown that the only possible eigenvalues are $0,1,\frac{1}{2}$. So the 
decomposition in Theorem~\ref{theo:peirce1} is unsurprising. 
The remarkable part is the statement that $V(c,1)$ and $V(c,0)$ are also 
algebras and that they are  orthogonal with respect to the Jordan product as well.
%
%

%
%

\begin{lemma}\label{lemma:cone}
	Let $(\jAlg, \jProd{}{} )$ be an Euclidean Jordan Algebra and let $\stdCone$ be the cone of squares
	$$
	\stdCone = \{\jProd{z}{z} \mid z \in \jAlg\}.
	$$
	Let $x \in \jAlg$ and denote its eigenvalues by $\lambda _1, \ldots, \lambda_r$. Then $x \in \stdCone$ if and only if $\lambda _i \geq 0$ for all $i$.
\end{lemma}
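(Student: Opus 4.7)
The plan is to prove both directions of the equivalence by leveraging the Spectral Theorem (Theorem~\ref{theo:spec}) applied either to $x$ or to a candidate square root $z$. The orthogonality relations $\jProd{c_i}{c_j} = 0$ for $i\neq j$ in a Jordan frame will do almost all the work.

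For the forward direction (eigenvalues nonnegative $\Rightarrow x \in \stdCone$), I would apply the Spectral Theorem to $x$ to obtain a Jordan frame $c_1,\ldots,c_r$ with $x = \sum_{i=1}^r \lambda_i c_i$ and each $\lambda_i \geq 0$. Then I would define
\[
z = \sum_{i=1}^r \sqrt{\lambda_i}\, c_i
\]
and compute $\jProd{z}{z}$ by bilinearly expanding. Using $\jProd{c_i}{c_j} = 0$ for $i\neq j$ together with $\jProd{c_i}{c_i} = c_i$, the cross terms vanish and one recovers $\jProd{z}{z} = \sum_i \lambda_i c_i = x$, so $x \in \stdCone$.

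For the converse, assume $x = \jProd{z}{z}$ for some $z \in \jAlg$. Apply the Spectral Theorem to $z$ to write $z = \sum_{j=1}^r \mu_j d_j$ for a Jordan frame $d_1,\ldots,d_r$. The same bilinear expansion as above yields
\[
x = \jProd{z}{z} = \sum_{j=1}^r \mu_j^2 d_j,
\]
which exhibits $x$ as a linear combination of a Jordan frame with coefficients $\mu_j^2 \geq 0$. Since the eigenvalues of $x$ are intrinsic invariants (they form a multiset independent of the chosen Jordan frame, as asserted by the uniqueness part of the Spectral Theorem and standard), the eigenvalues $\lambda_1,\ldots,\lambda_r$ of $x$ must coincide with $\mu_1^2,\ldots,\mu_r^2$ up to reordering, hence are all nonnegative.

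The main potential obstacle is the step invoking that eigenvalues are intrinsic to $x$: Theorem~\ref{theo:spec} as cited states uniqueness of the $\lambda_i$ given a Jordan frame, and I would want to appeal (either explicitly or by a short remark) to the standard fact that the multiset of eigenvalues does not depend on the particular Jordan frame chosen. Given that the statement of the lemma already speaks of ``the eigenvalues of $x$,'' this convention is presumably in force, and the argument above is a straightforward two-line bilinear calculation once the Spectral Theorem is applied on the right side.
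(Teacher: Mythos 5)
Your proof is correct, and the converse direction is exactly the paper's argument: apply the spectral theorem to a square root $z$, expand $\jProd{z}{z}$ using the orthogonality of the Jordan frame to get $x=\sum_j \mu_j^2 d_j$, and invoke uniqueness of the eigenvalues. The worry you flag about eigenvalues being intrinsic is resolved the same way in the paper; Theorem III.1.2 of Faraut--Kor\'anyi asserts uniqueness of the spectrum as a multiset, and the paper's own proof leans on this in precisely the same step (``uniqueness implies that each $\lambda_i$ must be among the $\mu_j^2$'').

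The one place you genuinely diverge is the direction ``nonnegative eigenvalues $\Rightarrow x\in\stdCone$.'' The paper observes that each $c_i=\jProd{c_i}{c_i}$ lies in $\stdCone$ and then concludes that the nonnegative combination $\sum_i\lambda_i c_i$ lies in $\stdCone$ as well; this implicitly uses that the cone of squares is convex, which is itself a nontrivial fact (Theorem III.2.1 in \cite{FK94}). You instead exhibit an explicit square root $z=\sum_i\sqrt{\lambda_i}\,c_i$ and verify $\jProd{z}{z}=x$ by the same orthogonality computation used in the other direction. This is slightly more self-contained: it shows membership in $\{\jProd{z}{z}\mid z\in\jAlg\}$ directly from the definition, without appealing to convexity of that set. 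Both arguments are valid; yours buys a small amount of independence from the structure theory, at the cost of nothing.
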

\begin{proof}
	$(\Rightarrow)$ Let $c_1, \ldots, c_r$ be a Jordan Frame for $x$ such that 
	$$
	x = \sum _{i=1}^r \lambda _i c_i.
	$$
	Since the $c_i$ are idempotent, they all belong to $\stdCone$.  Therefore, 
	if the $\lambda _i$ are nonzero, it is clear that $x$ belongs to $\stdCone$ as well.
	
	$(\Leftarrow)$ Since $x \in \stdCone$, we have $x = \jProd{z}{z}$ for some $z \in \jAlg$. Take a 
	Jordan Frame for $z$:
	$$
	z = \sum _{i=1}^r \mu _i d_i .
	$$
	Item $1.$ of Theorem~\ref{theo:main} allows us to conclude that 
	$$
	x =  \sum _{i=1}^r \mu _i^2 d_i.
	$$
	Then, uniqueness implies that  each $\lambda _i$ must be among the $\mu _j^2$. In particular, all 
	the $\lambda _i$ are nonnegative. 
\end{proof}

\begin{lemma}[Exercise 3 in Chapter III of \cite{FK94}]\label{lemma:ex}
	Let $x,y \in \stdCone$. Then $\jProd{x}{y} = 0$ if and only if $\inProd{x}{y} = 0$.
\end{lemma}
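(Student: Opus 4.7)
The plan is to treat the two directions separately, with the converse being the substantive one. For the forward direction, I would use the self-adjointness of multiplication together with $\jProd{y}{e} = y$ to write $\inProd{x}{y} = \inProd{x}{\jProd{y}{e}} = \inProd{\jProd{x}{y}}{e}$, which vanishes by hypothesis. This direction needs neither the hypothesis $x, y \in \stdCone$ nor any spectral theory.

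For the converse, I would expand $x$ via the Spectral Theorem (Theorem~\ref{theo:spec}) as $x = \sum_{i=1}^r \lambda_i c_i$, with $\lambda_i \geq 0$ by Lemma~\ref{lemma:cone}, and reduce the goal to showing $\jProd{c_i}{y} = 0$ for each index $i$ with $\lambda_i > 0$; linearity of the Jordan product then yields $\jProd{x}{y} = 0$. The first reduction step is to establish $\inProd{c_i}{y} \geq 0$: picking a square root $y = \jProd{w}{w}$ and decomposing $w = w_1 + w_{1/2} + w_0$ along the Peirce decomposition (Theorem~\ref{theo:peirce1}) with respect to $c_i$, a short calculation using self-adjointness and Peirce orthogonality should give
\[
\inProd{c_i}{y} = \inProd{\jProd{c_i}{w}}{w} = \norm{w_1}^2 + \tfrac{1}{2}\norm{w_{1/2}}^2 \geq 0.
\]
Since $\sum_i \lambda_i \inProd{c_i}{y} = \inProd{x}{y} = 0$ is a sum of nonnegative quantities, every term with $\lambda_i > 0$ must vanish, so $w_1 = w_{1/2} = 0$ and $w \in V(c_i, 0)$.

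The last step is to promote $w \in V(c_i, 0)$ to $y \in V(c_i, 0)$, which is precisely the statement $\jProd{c_i}{y} = 0$. Here I would invoke the part of Theorem~\ref{theo:peirce1} that says $V(c_i, 0)$ is a sub-Jordan algebra: closure under the product forces $y = \jProd{w}{w} \in V(c_i, 0)$. Summing over $i$ finishes the argument. The main delicacy, rather than a true obstacle, is the double role played by the Peirce identity $\inProd{c_i}{y} = \norm{w_1}^2 + \tfrac{1}{2}\norm{w_{1/2}}^2$: it both certifies nonnegativity of each summand in $\sum_i \lambda_i \inProd{c_i}{y}$ (forcing individual vanishing) and simultaneously extracts the sharp algebraic conclusion $w \in V(c_i, 0)$ from that vanishing, which is what lets us cross from an inner-product statement back to a Jordan-product statement.
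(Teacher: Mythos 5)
Your proof is correct, and the forward direction is essentially identical in spirit to the paper's (the paper writes $0 = \inProd{e}{\jProd{x}{y}} = \inProd{\jProd{e}{x}}{y} = \inProd{x}{y}$; you use $y = \jProd{y}{e}$ instead, which is the same move). For the substantive converse, however, you take a genuinely different route. The paper also reduces to idempotents via the spectral theorem (decomposing $y$ rather than $x$, which is immaterial), but its key step is variational: from $\inProd{x}{\jProd{c}{c}} = 0$ it considers $f(z) = \frac{1}{2}\inProd{x}{\jProd{z}{z}}$, which is nonnegative everywhere because $x \in \stdCone = \stdCone^*$, so the idempotent $c$ is a global minimizer and $\nabla f(c) = \jProd{x}{c} = 0$. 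You instead take a square root $y = \jProd{w}{w}$ and Peirce-decompose $w$ relative to each $c_i$, so that the single identity $\inProd{c_i}{y} = \inProd{\jProd{c_i}{w}}{w} = \norm{w_1}^2 + \frac{1}{2}\norm{w_{1/2}}^2$ does all the work: it certifies nonnegativity of each summand in $\sum_i \lambda_i \inProd{c_i}{y} = 0$ (where the paper invokes self-duality for the analogous step), it forces $w \in V(c_i,0)$ when a summand vanishes, and then the subalgebra property of $V(c_i,0)$ from Theorem~\ref{theo:peirce1} promotes this to $y = \jProd{w}{w} \in V(c_i,0)$, i.e., $\jProd{c_i}{y} = 0$. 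The trade-off: the paper's argument is shorter and needs no Peirce machinery, only self-duality and a small gradient computation; yours is purely algebraic, avoids differentiating $f$, and never actually invokes $\stdCone = \stdCone^*$, extracting the needed nonnegativity directly from the Peirce formula. Both arguments are complete.
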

\begin{proof}
	$(\Rightarrow)$ We have
	$$
	0 = \inProd{e}{\jProd{x}{y}} = \inProd{\jProd{e}{x}}{y} = \inProd{x}{y},
	$$
	due to the fact that the Jordan Algebra is Euclidean.
	
	$(\Leftarrow)$ First we show an auxiliary fact. Suppose that $\inProd{x}{\jProd{c}{c}} = 0$. 
	Consider 
	the function 
	$$
	f(z) = \frac{1}{2}\inProd{x}{\jProd{z}{z}}.
	$$
	Since $x \in \stdCone$ and $\stdCone = \stdCone^*$, $f$ is nonnegative everywhere. It follows that
	$c$ is a local minimum of $f$, therefore
	$$
	\nabla f(c) = \jProd{x}{c} = 0.
	$$
	
	Now take a Jordan frame $c_1, \ldots, c_r$ for $y$. We can 
	write
	$$
	y = \sum _{i= 1}^{\matRank y } \lambda _i c_i,
	$$
	where we suppose that only the first $\matRank y $ eigenvalues are nonzero and, therefore, 
	positive. The fact that $\inProd{x}{y} = 0$, implies that $\inProd{x}{c_i} = 0$ for 
	$i = 1, \ldots, \matRank y$. 
	
	Since the $c_i$ are idempotent, we also have $\inProd{x}{\jProd{c_i}{c_i}} = 0$. By what we have 
	just shown, we have $\jProd{x}{c_i} = 0$, for  $i = 1, \ldots, \matRank y$. 
	This implies that $\jProd{x}{y} = 0$.
\end{proof}

\begin{lemma}\label{lemma:reint_member}
	Let $\stdCone$ be a closed convex cone and let $x \in \stdCone$. Then $x \not \in \reInt \stdCone$ if and only if
	$\{x\}^\perp \cap (\stdCone^* \setminus \stdCone^\perp ) \neq \emptyset$.
\end{lemma}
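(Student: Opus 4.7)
The plan is to prove the equivalence via the classical correspondence between relative boundary points of a convex set and supporting functionals, treating each direction separately.

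For the ``if'' direction, I would argue by contradiction using Lemma~\ref{lemma:extend}. Suppose $s \in \stdCone^* \cap \{x\}^\perp$ with $s \notin \stdCone^\perp$. The latter condition gives some $y \in \stdCone$ with $\inProd{s}{y} > 0$ (the inequality must be strict because $s \in \stdCone^*$ forces $\inProd{s}{\cdot} \geq 0$ on $\stdCone$, and if it were identically zero, $s$ would lie in $\stdCone^\perp$). Now if $x \in \reInt \stdCone$, Lemma~\ref{lemma:extend} yields $\alpha > 1$ with $\alpha x + (1-\alpha) y \in \stdCone$. Pairing this element with $s$ gives $(1-\alpha)\inProd{s}{y} < 0$, contradicting $s \in \stdCone^*$.

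For the ``only if'' direction, I would use the supporting hyperplane theorem for relative boundary points of closed convex sets (Rockafellar, Theorem~11.6): since $x \in \stdCone \setminus \reInt \stdCone$ is a relative boundary point of $\stdCone$, there exists a nonzero linear functional $s$ that supports $\stdCone$ at $x$ and does not vanish identically on $\stdCone$. That is, $\inProd{s}{x} \leq \inProd{s}{y}$ for all $y \in \stdCone$, with strict inequality for some $y' \in \stdCone$. The conical structure does the rest: plugging $y = 0$ yields $\inProd{s}{x} \leq 0$, while $x \in \stdCone$ combined with a scaling argument (applying the inequality to $ty$ and letting $t \to 0^+$ and $t \to \infty$) gives both $\inProd{s}{y} \geq 0$ for all $y \in \stdCone$ and $\inProd{s}{x} \geq 0$. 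Hence $\inProd{s}{x} = 0$ and $s \in \stdCone^*$. Finally, $\inProd{s}{y'} > 0$ shows $s \notin \stdCone^\perp$, so $s$ witnesses $\{x\}^\perp \cap (\stdCone^* \setminus \stdCone^\perp) \neq \emptyset$.

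The main subtlety is just ensuring that the supporting hyperplane theorem is applied in its proper \emph{relative} form, since $\stdCone$ need not be full-dimensional; the standard reference handles this correctly by working inside $\affineHull \stdCone = \spanVec \stdCone$, after which the extension of the functional to $\Re^n$ by zero on the orthogonal complement automatically has the required properties. The rest of the argument is essentially bookkeeping with the cone axioms.
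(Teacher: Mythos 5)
Your proof is correct and is essentially the paper's argument: the paper invokes the proper-separation criterion (Rockafellar, Theorem~11.3) as a single equivalence and then performs the same cone bookkeeping you describe (plugging in $y=0$ and scaling to force the separating level to be $0$, hence $s \in \stdCone^*$, $\inProd{s}{x}=0$, and $\stdCone \not\subset \{s\}^\perp$). The only real difference is in the ``if'' direction, where you give a self-contained contradiction via Lemma~\ref{lemma:extend} instead of appealing to the converse half of the separation theorem --- which is fine, and is in fact the same trick the paper uses in the footnote to the proof of Lemma~\ref{lemma:ext}.
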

\begin{proof}
	Note that $x$ does not belong to $\reInt \stdCone$ if and only if $x$ and $\stdCone$ can be \emph{properly separated}, 
	see Theorem~11.3 in \cite{rockafellar}. This means that there is a hyperplane $H = \{z \mid \inProd{z}{s} = \alpha \}$ 
	such that $x$ and $\stdCone$ belong to opposite closed half-spaces and at least one of them is not entirely 
	contained in $\stdCone$. We may assume that
	$$
	\inProd{x}{s} \leq \alpha \leq \inProd{y}{s},
	$$
	for all $y \in \stdCone$. In order for the inequality to hold, we must have $s \in \stdCone^*$.  
	Furthermore, since $x \in \stdCone$ and $0 \in \stdCone$, we conclude that $\inProd{x}{s} = 0$ and $\alpha = 0$. So that 
	$x \in H$ and $H = \{s\}^\perp$. As the separation is proper we have $\stdCone \not \subset \{s\}^\perp$. Therefore, 
	$s \in \{x\}^\perp \cap (\stdCone^* \setminus \stdCone^\perp )$.
	
	Reciprocally, by definition, the existence of $s \in \{x\}^\perp \cap (\stdCone^* \setminus \stdCone^\perp ) \neq \emptyset$
	ensures that $\{s\}^\perp$ properly separates $x$ and $\stdCone$.
\end{proof}

The following lemma is well-known and can be derived from various propositions that appear in 
\cite{FK94}, such as Proposition~III.2.2. It also follows from Equation (10) in \cite{Sturm2000}, but it appears there without proof.
For the sake of self-containment, we include a short proof below.

\begin{proposition}\label{prop:int_char}
	Let $\stdCone$ be a symmetric cone of rank $r$. The following are equivalent.
	\begin{enumerate}[$(i)$]
		\item $x \in \reInt \stdCone$
		\item $x \in \stdCone$ and $\matRank x = r$.
		\item all the eigenvalues of $x$ are positive.
	\end{enumerate}
\end{proposition}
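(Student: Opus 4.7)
The plan is to establish (ii) $\Leftrightarrow$ (iii) directly from the Spectral Theorem and Lemma~\ref{lemma:cone}, and to derive (i) $\Leftrightarrow$ (iii) from the perpendicular characterization of the relative interior supplied by Lemma~\ref{lemma:reint_member}, together with the orthogonality transfer of Lemma~\ref{lemma:ex}.

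For (ii) $\Leftrightarrow$ (iii), I would observe that Theorem~\ref{theo:spec} produces a decomposition $x = \sum_{i=1}^r \lambda_i c_i$ with exactly $r$ terms, so $\matRank x$, the number of nonzero $\lambda_i$, equals $r$ precisely when no eigenvalue vanishes. Combined with Lemma~\ref{lemma:cone}, which characterizes $x \in \stdCone$ by $\lambda_i \geq 0$ for all $i$, the equivalence of (ii) and (iii) is immediate.

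For (iii) $\Rightarrow$ (i), self-duality gives $\stdCone^* = \stdCone$, and $\interior\stdCone \neq \emptyset$ forces $\stdCone^\perp = \{0\}$, so Lemma~\ref{lemma:reint_member} reduces the task to showing that no $s \in \stdCone \setminus \{0\}$ satisfies $\inProd{x}{s} = 0$. Given such an $s$, I expand in a Jordan frame of $x$ to get $0 = \inProd{x}{s} = \sum_i \lambda_i \inProd{c_i}{s}$. Each primitive idempotent $c_i$ is a square and hence lies in $\stdCone = \stdCone^*$, so every $\inProd{c_i}{s}$ is nonnegative; since all $\lambda_i > 0$, this forces $\inProd{c_i}{s} = 0$ for each $i$. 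Lemma~\ref{lemma:ex} then upgrades these to $\jProd{c_i}{s} = 0$, and summing with $\sum_i c_i = e$ yields $s = \jProd{e}{s} = 0$, a contradiction.

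For (i) $\Rightarrow$ (iii), I argue the contrapositive: if some $\lambda_{i_0} = 0$, then the corresponding primitive idempotent $c_{i_0}$ is a nonzero member of $\stdCone$, and the Jordan frame relations $\jProd{c_i}{c_{i_0}} = 0$ for $i \neq i_0$ lift via Lemma~\ref{lemma:ex} to $\inProd{c_i}{c_{i_0}} = 0$, giving $\inProd{x}{c_{i_0}} = \lambda_{i_0}\norm{c_{i_0}}^2 = 0$; Lemma~\ref{lemma:reint_member} then certifies $x \notin \reInt\stdCone$. I do not anticipate a real obstacle, as the preceding lemmas do essentially all the work; the only substantive step is the use of self-duality to ensure that each $\inProd{c_i}{s}$ is nonnegative, which is precisely what converts the single scalar equation $\inProd{x}{s} = 0$ into a separate vanishing condition on each eigendirection.
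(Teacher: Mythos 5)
Your proposal is correct and takes essentially the same route as the paper: both arguments rest on Lemma~\ref{lemma:reint_member}, use self-duality to force each $\inProd{c_i}{s}=0$ term by term, and upgrade to $\jProd{c_i}{s}=0$ via Lemma~\ref{lemma:ex}, the only difference being that you route all implications through $(iii)$ while the paper proves the cycle $(i)\Rightarrow(ii)\Rightarrow(iii)\Rightarrow(i)$. One small point: the contrapositive of $(i)\Rightarrow(iii)$ must also cover the case of a negative eigenvalue, but Lemma~\ref{lemma:cone} then gives $x\notin\stdCone$ immediately, so nothing substantive is missing.
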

\begin{proof}
	$(i) \Rightarrow (ii) $ Since $x \in \reInt \stdCone$, $x$ clearly belongs to $\stdCone$. 
	Write the Jordan decomposition for $x$.
	$$
	x = \sum _{i = 1}^r \lambda _i c_i. 
	$$
	Note that if $\lambda _i = 0$, then $\inProd{x}{c_i} = 0$. As $\stdCone$ is self-dual and $c_i$ is 
	idempotent, we have that $c_i \in \{x\}^\perp \cap (\stdCone^* \setminus \stdCone^\perp )$, which 
	according to Lemma~\ref{lemma:reint_member}, implies that $x \not \in \reInt \stdCone$.	
	
	$(ii) \Rightarrow (iii) $
	If $x \in \stdCone$, the eigenvalues of $x$ must be nonnegative, due to Lemma~\ref{lemma:cone}. Since we are assuming that the rank of $x$ is $r$, they must  be positive.
	
	$(iii)  \Rightarrow (i)$ Take a Jordan decomposition for $x$
	$$
	x = \sum _{i = 1}^r \lambda _i c_i. 
	$$
	Then, clearly, $x \in \stdCone$. Suppose that $x \not \in \reInt \stdCone$. Then, Lemma~\ref{lemma:reint_member} tells 
	us the existence of an $s \in \stdCone$ such that $s \not \in \stdCone^\perp$ and $\inProd{s}{x} = 0$. 
	Therefore, $\inProd{s}{c_i} = 0$, for every $i$. Which implies that
	$$
	\inProd{s}{c_1 + \ldots + c_r} = \inProd{s}{e} = 0.
	$$
	According to Lemma~\ref{lemma:ex}, we have $\jProd{s}{e} = 0$, which implies $s = 0$. This is a contradiction.
\end{proof}

\subsection{The Carath\'{e}odory number of a symmetric cone}\label{sec:car_sym}
In order to compute $\kappa(\stdCone)$, we need to know the facial 
structure of $\stdCone$. The next result follows from Theorem~2 in \cite{FB06}, due to Faybusovich. For the sake of 
self-containment we also give a proof here. In what follows, recall that $\minFacePoint{x}{\stdCone}$ indicates the minimal face of $\stdCone$ 
which contains $x$.
\begin{proposition}\label{prop:faces}
Let $\stdCone$ be a symmetric cone of rank $r$ and $x \in \stdCone$. 
Furthermore, let $c_1, \ldots, c_r$ be a Jordan frame for $x$, ordered in such a way that 
$$
x = \sum _{i = 1}^{\matRank x} \lambda _i c_i
$$
and $\lambda _1, \ldots, \lambda _{\matRank x}$ are positive. Then
\begin{enumerate}[$(i)$]
	\item $\minFacePoint{x}{\stdCone} = \stdCone \cap \{c_{\matRank x + 1} + \ldots + c_r\}^\perp$ and $\minFacePoint{x}{\stdCone}$ is the cone of 
	squares of $V(c_1+\ldots + c_{\matRank x},1)$,
	\item $\matRank \minFacePoint{x}{\stdCone}  = \matRank x$.
\end{enumerate}

In addition, $\minFacePoint{x}{\stdCone}$ is properly contained in $\stdCone$ if and only if 
$\matRank x < r$.
\end{proposition}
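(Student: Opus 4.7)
The plan is to introduce the idempotent $c := c_1 + \cdots + c_{\matRank x}$ and its Peirce complement $c' := e - c = c_{\matRank x + 1} + \cdots + c_r$, and to verify three facts about the candidate face $\stdFace := \stdCone \cap \{c'\}^\perp$: that it coincides with the cone of squares of the subalgebra $V(c,1)$; that it contains $x$ in its relative interior; and that it is itself a face of $\stdCone$. Once these are in place, the identity $\minFacePoint{x}{\stdCone} = \stdFace$ in part $(i)$ of the proposition follows from the characterization recalled at the beginning of Section~\ref{sec:main}, namely that the minimal face containing a point is the unique face whose relative interior contains it.

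To identify $\stdFace$ with the cone of squares of $V(c,1)$, I would first use the self-duality $\stdCone = \stdCone^*$ together with Lemma~\ref{lemma:ex}: for $y \in \stdCone$, $\inProd{y}{c'} = 0$ if and only if $\jProd{y}{c'} = 0$. Since $\jProd{e}{y} = y$ and $e = c + c'$, the latter is in turn equivalent to $\jProd{c}{y} = y$, i.e.\ $y \in V(c,1)$. Hence $\stdFace = \stdCone \cap V(c,1)$. To recognize this as the cone of squares of $V(c,1)$ (which has identity $c$), I would apply the spectral theorem inside $V(c,1)$ to an arbitrary $y \in \stdFace$ and observe that the resulting decomposition also serves as a spectral decomposition in $\jAlg$ after padding with $V(c,0)$-primitive idempotents carrying eigenvalue zero; Lemma~\ref{lemma:cone} then forces the $V(c,1)$-eigenvalues to be nonnegative, so $y$ is a square in $V(c,1)$. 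That $\stdFace$ is a face is the standard exposed-face argument applied to the supporting functional $\inProd{\cdot}{c'}$, which is nonnegative on $\stdCone$ by self-duality.

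To place $x$ in $\reInt \stdFace$, I would apply Proposition~\ref{prop:int_char} inside the Euclidean Jordan algebra $V(c,1)$: its identity is $c$, and the expression $x = \sum_{i=1}^{\matRank x} \lambda_i c_i$ exhibits $x$ as having positive $V(c,1)$-eigenvalues along a Jordan frame of $V(c,1)$. For part $(ii)$, the idempotents $c_1,\ldots,c_{\matRank x}$ remain primitive within $V(c,1)$, because any further splitting of a $c_i$ into two nonzero orthogonal idempotents inside the subalgebra $V(c,1)$ would be such a splitting inside $\jAlg$ as well, contradicting primitivity in the original Jordan frame. Hence $c$ decomposes into $\matRank x$ primitive idempotents in $V(c,1)$, so $\matRank \stdFace = \matRank x$. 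Finally, $\stdFace \subsetneq \stdCone$ holds precisely when $c' \neq 0$, equivalently $\matRank x < r$, because in that case $\inProd{c'}{c'} > 0$ exhibits $c' \in \stdCone \setminus \stdFace$. The most delicate step, I expect, is the identification of $\stdCone \cap V(c,1)$ with the cone of squares of $V(c,1)$: an element $y \in V(c,1)$ that can be written as $z^2$ for some $z \in \jAlg$ need not have $z \in V(c,1)$, so one genuinely must go through the spectral theorem rather than a direct square-root manipulation.
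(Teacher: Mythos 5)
Your proposal is correct and follows essentially the same route as the paper: identify $\stdCone \cap \{c'\}^\perp$ with the cone of squares of $V(c,1)$ via Lemma~\ref{lemma:ex} and the spectral theorem (padding a $V(c,1)$-frame to a frame of $\jAlg$), then locate $x$ in the relative interior using Proposition~\ref{prop:int_char}. The only cosmetic differences are that you derive $y \in V(c,1)$ directly from $\jProd{c'}{y} = y - \jProd{c}{y} = 0$ rather than splitting $y$ into its three Peirce components, and you witness proper containment with $c'$ instead of $e$.
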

\begin{proof}
	 Let $s = \matRank x$ and
	\begin{align*}
	c & = c_1 + \ldots + c _{s} \\
	w & = c_ {s +1} + \ldots + c_{r}.
	\end{align*}
According to Theorem~\ref{theo:peirce1}, $V(c,1)$ is an Euclidean 
Jordan Algebra. Let $\tilde \stdFace$ denote the cone of squares 
of $V(c,1)$. Note that since $\stdCone$ is self-dual, $\{w\}^\perp$ is a 
supporting hyperplane of $\stdCone$. Therefore, $\stdCone \cap \{w\}^\perp$ is 
a face of $\stdCone$. Our first step is to show that $\tilde \stdFace = \stdCone \cap \{w\}^\perp$.
	
%
%

\fbox{ $\tilde \stdFace \subseteq \stdCone \cap \{w\}^\perp$
}
Let $y \in \tilde \stdFace$ and pick a Jordan Frame for $y$ by seeing it as an element of $V(c,1)$.
Then, 
$$
y = \lambda _1 d_1 + \ldots + \lambda _{s} d_{s},
$$
where $d_1 + \ldots + d_{s} = c$, since $c$ is the identity in $V(c,1)$. Moreover, due to  Lemma~\ref{lemma:cone}, 
the $\lambda _i$ are all nonnegative. Since $\jProd{c}{w} = 0$, we have $\inProd{c}{w} = 0$, by Lemma 
\ref{lemma:ex}. As each $d_i$ belongs to $\stdCone$, we also have $\inProd{d_i}{w} = 0$, which 
implies that $y \in w^\perp$. 

\fbox{ $\tilde \stdFace \supseteq \stdCone \cap \{w\}^\perp$
} Let $y \in \stdCone \cap \{w\}^\perp$, and following Theorem~\ref{theo:peirce1}, decompose  $y$ as
$$
y = y_1 + y_2 + y_3,
$$
with $y_1 \in V(c,1), y_2 \in V\left(c,\frac{1}{2}\right), y_3 \in V(c,0)$. Because $y \in \{w\}^\perp$, we 
have $\jProd{y}{w} = 0$, by Lemma \ref{lemma:ex}. Therefore,
\begin{align*}
\jProd{y}{w} &= \jProd{(y_1+y_2+y_3)}{(e-c)}\\
& = y_1 + y_2 + y_3 -y_1 - \frac{1}{2}y_2 \\
& = \frac{y_2}{2} + y_3 \\
& = 0
\end{align*}
Since $y_2$ and $y_3$ are orthogonal, we conclude that $y_2 = y_3 = 0$. So that 
$y = y_1$. Because $y \in \stdCone$, all its eigenvalues are nonnegative, due to Lemma~\ref{lemma:cone}.
We can also compute the eigenvalues of $y$, by seeing it as an element of $V(c,1)$.
Note that a Jordan Frame for $y$ in $V(c,1)$ can be extended to a Jordan frame for $y$ in $\jAlg$ by 	adding the remaining $c_{s+1}, \ldots, c_r$. Due to  uniqueness, it follows 
that the eigenvalues of $y$ in $V(c,1)$ are also nonnegative. Therefore, $y \in \tilde \stdFace$.

We then conclude that $\tilde \stdFace  = \stdCone \cap \{w\}^\perp$ and, therefore, $\tilde \stdFace $ is a face of $\stdCone$. Theorem 
\ref{theo:spec} guarantees that no element in $\stdCone \cap \{w\}^\perp$ has rank bigger than $s$. As $c \in \stdCone \cap \{w\}^\perp $, the rank of $\stdCone \cap \{w\}^\perp$ is indeed $s$. 
This 
proves item $(ii)$.
Moreover, due to item $(iii)$ of Proposition~\ref{prop:int_char}, $x \in \reInt (\stdCone \cap \{w\}^\perp)$.
Therefore, $\minFacePoint{x}{\stdCone} = \stdCone \cap \{w\}^\perp$.

Finally, note that if $s = r$, then $V(c,1) = V(e,1) = \jAlg$, so that 
$\minFacePoint{x}{\stdCone} = \stdCone$. Therefore, if $\minFacePoint{x}{\stdCone}$ is 
a proper face, then $s < r$. Conversely, if $s < r$, it is clear that $\minFacePoint{x}{\stdCone}$
must be proper, since it does not contain $e$.
\end{proof}

Note that if $\stdFace$ is an arbitrary face of $\stdCone$, then 
$\minFacePoint{x}{\stdCone} = \stdFace$, for all $x \in \reInt \stdFace$. So Proposition~\ref{prop:faces}
applies to all faces of $\stdCone$.

Before we proceed we need the following observation, which is a corollary to the Jordan decomposition.
\begin{corollary}\label{col:primitive}
	Let $c$ be a primitive idempotent, then 
	$$
	V(c,1) = \{\beta c \mid \beta \in \Re \}.
	$$	
\end{corollary}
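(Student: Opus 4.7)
The plan is to use the Peirce decomposition together with the Spectral Theorem applied \emph{internally} to $V(c,1)$. By Theorem~\ref{theo:peirce1}, $V(c,1)$ is itself a Euclidean Jordan algebra; moreover, since $c \circ z = z$ for every $z \in V(c,1)$, the element $c$ plays the role of the identity element of $V(c,1)$. This is the key structural fact that unlocks the argument.

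Given an arbitrary $y \in V(c,1)$, I would apply Theorem~\ref{theo:spec} to $y$ regarded as an element of the algebra $V(c,1)$. This produces primitive idempotents $d_1, \ldots, d_s$ of $V(c,1)$, pairwise orthogonal under the Jordan product, summing to the identity of $V(c,1)$, which is $c$. The goal is to force $s = 1$. Suppose for contradiction that $s \geq 2$. Set $a := d_1$ and $b := d_2 + \cdots + d_s$; both are nonzero. A short calculation using $d_i \circ d_j = 0$ for $i \neq j$ and $d_i \circ d_i = d_i$ shows that $b \circ b = b$, so $b$ is an idempotent of $\jAlg$ (the product is inherited from $\jAlg$), while clearly $a$ is also an idempotent of $\jAlg$. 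Moreover $a \circ b = 0$. Thus $c = a + b$ is a decomposition of $c$ into two nonzero orthogonal idempotents, contradicting the primitivity of $c$ in $\jAlg$.

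Hence $s = 1$, so $d_1 = c$ and $y = \mu_1 c$ for some $\mu_1 \in \Re$. This gives the inclusion $V(c,1) \subseteq \{\beta c \mid \beta \in \Re\}$. The reverse inclusion is immediate from $c \circ (\beta c) = \beta c$, completing the proof.

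The only subtle point is verifying that primitivity of $c$ as an idempotent of $\jAlg$ is actually what gets contradicted — one must check that idempotents of $V(c,1)$ are also idempotents of $\jAlg$ (true, since the Jordan product is restricted from $\jAlg$), and that the sum $d_2 + \cdots + d_s$ remains idempotent (true by pairwise orthogonality within the Jordan frame). Beyond this bookkeeping, the argument is routine.
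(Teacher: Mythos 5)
Your proof is correct and follows essentially the same route as the paper: regard $V(c,1)$ as a Euclidean Jordan algebra with identity $c$, apply the spectral theorem there, and use primitivity of $c$ to force the Jordan frame to be a singleton. You merely spell out the primitivity step (grouping $d_2+\cdots+d_s$ into a single orthogonal idempotent) that the paper leaves implicit, which is a harmless and correct elaboration.
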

\begin{proof}
	$V(c,1)$ is an Euclidean Jordan Algebra and, in fact, $c$ is the identity element in $V(c,1)$. 
	Let $x \in V(c,1)$ and consider a Jordan frame $d_1, \ldots, d_r$ for $x$. Because 
	$$
	d_1 + \ldots + d_r = c,
	$$ 
	it must be the case that $r = 1$, since $c$ is primitive. Therefore, $x = \beta c$.
\end{proof}

The next result was proved for simple Jordan Algebras in \cite{FK94}. Here, we give a more general statement.
\begin{corollary}\label{col:extreme}
	Let $\stdCone$ be a symmetric cone and $x \in \stdCone$ with $x \neq 0$. The following are equivalent.
	\begin{enumerate}[$(i)$]
		\item $x$ belongs to an extreme ray.
		\item $x$ has rank 1, {\it i.e.}, $x = \alpha c$ with $\alpha > 0$ and $c$ primitive idempotent.
	\end{enumerate}
\end{corollary}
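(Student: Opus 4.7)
The plan is to translate membership in an extreme ray into a statement about the minimal face $\minFacePoint{x}{\stdCone}$, and then identify that face from the Jordan decomposition of $x$ by combining Proposition~\ref{prop:faces} with Corollary~\ref{col:primitive}.

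For $(ii) \Rightarrow (i)$, suppose $x = \alpha c$ with $\alpha > 0$ and $c$ a primitive idempotent. I would complete $c$ to a Jordan frame of $e$ and invoke Proposition~\ref{prop:faces} with $\matRank x = 1$ to see that $\minFacePoint{x}{\stdCone}$ is the cone of squares of $V(c,1)$. By Corollary~\ref{col:primitive}, $V(c,1) = \Re c$, so its cone of squares is $\{\beta^2 c \mid \beta \in \Re\} = \{\gamma c \mid \gamma \geq 0\}$. This is one-dimensional, hence an extreme ray of $\stdCone$ containing $x$.

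For $(i) \Rightarrow (ii)$, suppose $x$ lies on an extreme ray $R$, and write the Jordan decomposition $x = \sum_{i=1}^{s} \lambda_i c_i$ with $\lambda_i > 0$ and $s = \matRank x$. I argue by contradiction that $s \geq 2$ is impossible. Set $a := \lambda_1 c_1$ and $b := x - a = \sum_{i=2}^{s}\lambda_i c_i$; both belong to $\stdCone$ by Lemma~\ref{lemma:cone}, and both are nonzero. Since $x = a + b$ and $x$ lies on the extreme ray $R$, the face property applied to $\tfrac{1}{2}a + \tfrac{1}{2}b = \tfrac{x}{2} \in R$ forces $a, b \in R$, so $a = \mu x$ for some $\mu \geq 0$. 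But the primitive idempotents in the frame are pairwise orthogonal with respect to the inner product, since $\inProd{c_i}{c_j} = \inProd{\jProd{c_i}{c_j}}{e} = 0$ for $i \neq j$, and hence linearly independent. Comparing the coefficient of $c_2$ in $a = \mu x$ gives $\mu \lambda_2 = 0$, so $\mu = 0$ and $a = 0$, a contradiction. Therefore $s = 1$, and $x = \lambda_1 c_1$ is a positive multiple of a primitive idempotent.

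No essential difficulty is expected; the main points of care are invoking the face property through the midpoint $\tfrac{x}{2}$ and justifying linear independence of the primitive idempotents in a Jordan frame, both of which follow directly from the material already developed.
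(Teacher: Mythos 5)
Your proposal is correct and follows essentially the same route as the paper: $(i)\Rightarrow(ii)$ via the face property applied to the Jordan decomposition together with the orthogonality (hence linear independence) of the frame, and $(ii)\Rightarrow(i)$ via Proposition~\ref{prop:faces} combined with Corollary~\ref{col:primitive}. Your version merely spells out the face-property step through the midpoint and the $a=\mu x$ comparison, which the paper states more tersely.
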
 
\begin{proof}
	$(i) \Rightarrow (ii)$ Consider a Jordan Frame for $x$ and write 
	\begin{equation*}		
	x = \sum _{i=1}^r \lambda _i c_i.
	\end{equation*}
	Due to Lemma~\ref{lemma:cone}, we have $\lambda _i \geq 0$ for all $i$.
	
	Let $\stdFace$ be the extreme ray of $\stdCone$ that contains $x$. Because $\stdFace$ is a face, if 
	$\lambda _i > 0$, then $c_i \in \stdFace$. Since $\stdFace$ has dimension one and the $c_i$ are orthogonal,  
	exactly one of the $\lambda _i$ is positive while all the others are zero.
	
	$(ii) \Rightarrow (i)$ Let $\stdFace = \minFacePoint{x}{\stdCone}$. Due to Proposition~\ref{prop:faces}, 
	$\stdFace$ has rank one and is the cone of squares of $V(c,1)$, where $c$ is a primitive 
	idempotent. Due to Corollary~\ref{col:primitive}, both $V(c,1)$ and $\stdFace$ are  one-dimensional.
\end{proof}

The Jordan decomposition together with Corollary~\ref{col:extreme} shows that 
given $x \in \stdCone$ we can write it as a sum of at most $\matRank x$ elements that live in extreme rays. That is, 
$$
\kappa (x) \leq \matRank x.
$$
The caveat is that the decomposition given by the Spectral Theorem requires that the elements be 
orthogonal to each other, while in the definition of $\kappa$ there is no such requirement. 

The next result shows that, in fact, $\kappa (x) = \matRank x$. 
This has been proven before by G\"uller and Tun\c{c}el \cite{GO98}, 
but the exposition given here is, perhaps, more elementary and does not rely on the classification of Euclidean Jordan Algebras neither on the theory of homogeneous cones as in \cite{TT03}. 
%
%

\begin{theorem}\label{theo:sym_car}
	Let $\stdCone$ be a symmetric cone and $x \in \stdCone$. We have
	$$
	\kappa (x) = \matRank x.
	$$
\end{theorem}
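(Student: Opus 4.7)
The bound $\kappa(x) \leq \matRank x$ is immediate: by the Spectral Theorem (Theorem~\ref{theo:spec}) together with Corollary~\ref{col:extreme}, the Jordan decomposition $x = \sum_{i=1}^{\matRank x} \lambda_i c_i$ exhibits $x$ as a sum of $\matRank x$ elements each lying on an extreme ray. The work therefore lies entirely in establishing the reverse inequality $\kappa(x) \geq \matRank x$.

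To this end, I will suppose $x = d_1 + \cdots + d_k$ with each $d_i$ on an extreme ray and argue $k \geq \matRank x$. By Corollary~\ref{col:extreme}, each $d_i = \alpha_i c_i$ with $\alpha_i > 0$ and $c_i$ a primitive idempotent. Since $\minFacePoint{x}{\stdCone}$ is a face and $x$ is written as a sum of elements of $\stdCone$, the standard face property forces each $d_i$, and hence each $c_i$, to lie in $\minFacePoint{x}{\stdCone}$. By Proposition~\ref{prop:faces}, this minimal face is the cone of squares of the Euclidean Jordan subalgebra $V(c,1)$ of rank exactly $\matRank x$, and $x$ lies in its relative interior. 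Working inside this subalgebra, we may therefore assume without loss of generality that $x \in \reInt \stdCone$ with $r := \matRank \stdCone = \matRank x$, so the task is reduced to showing $k \geq r$.

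The main tool is the \emph{subadditivity of rank} in an Euclidean Jordan Algebra: for any $y, z \in \stdCone$, one has $\matRank(y + z) \leq \matRank(y) + \matRank(z)$. Granting this, iterated application to $x = d_1 + \cdots + d_k$ combined with $\matRank d_i = 1$ for each $i$ (Corollary~\ref{col:extreme}) yields
$$ r \;=\; \matRank(x) \;\leq\; \sum_{i=1}^k \matRank(d_i) \;=\; k,$$
which is exactly the inequality we need.

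The principal obstacle is proving the subadditivity of rank. My plan is to use Proposition~\ref{prop:faces} to attach to each $y, z \in \stdCone$ the idempotents $c_y, c_z$ of their minimal faces (with $\tr(c_y) = \matRank y$ and $\tr(c_z) = \matRank z$) and to observe, via a face-lattice argument, that $\minFacePoint{y + z}{\stdCone} = \minFacePoint{c_y + c_z}{\stdCone}$, so $\matRank(y + z) = \matRank(c_y + c_z)$. It then suffices to show $\matRank(c_y + c_z) \leq \tr(c_y) + \tr(c_z)$ for idempotents, which I would prove by induction on $\tr(c_z)$; the base case (where $c_z$ is primitive) amounts to saying that enlarging a face by one extreme ray raises its rank by at most one, and is handled by decomposing $c_z$ into its Peirce components with respect to $c_y$ via Theorem~\ref{theo:peirce1} and applying Corollary~\ref{col:primitive} together with the fact that the Peirce $1$- and $0$-components of a cone element stay in the corresponding sub-cones. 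This Peirce analysis is the main technical work of the proof.
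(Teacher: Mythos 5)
Your upper bound $\kappa(x)\leq\matRank x$ and the reduction to the minimal face are fine and match the paper. The real issue is your reverse inequality, which you make rest entirely on the subadditivity of rank over the cone, $\matRank(y+z)\leq\matRank y+\matRank z$. That lemma is in fact true for Euclidean Jordan algebras, but your proposed proof of it has a genuine gap at its crux. In your base case you must show that for an idempotent $c$ and a primitive idempotent $d$ one has $\matRank(c+d)\leq\tr(c)+1$. Writing $d=d_1+d_{1/2}+d_0$ in the Peirce decomposition relative to $c$, the step that actually carries the argument is that the component $d_0\in V(c,0)$ has rank at most one (so that its support idempotent $f$ yields $c+f$ of trace $\tr(c)+1$ whose Peirce $1$-space absorbs both $c$ and $d$). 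Neither Corollary~\ref{col:primitive} nor the cone-invariance of the Peirce projections gives you this: the natural proof goes through the quadratic representation $P_u=2L_u^2-L_{u^2}$ and the fundamental formula $P_{P_u(w)}=P_uP_wP_u$ (a primitive $d$ has $P_d$ of linear rank one, hence so does $P_{d_0}$, hence $\matRank d_0\leq 1$), machinery that appears nowhere in the paper and that you have not supplied. As written, ``this Peirce analysis is the main technical work'' is precisely the part that is missing, and the induction step (which needs the identification of $\minFacePoint{y+z}{\stdCone}$ with the join of the minimal faces, also only asserted) is not spelled out either.

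For contrast, the paper avoids all of this with a much shorter argument: after reducing to the minimal face, homogeneity supplies an automorphism of $\stdCone$ carrying $x$ to the identity $e$, so it suffices to treat $x=e$. If $e=\alpha_1d_1+\cdots+\alpha_kd_k$ with $\alpha_i>0$ and $d_i$ primitive idempotents, taking traces gives $\matRank(e)=\sum_i\alpha_i$, while the computation $(1-\alpha_i)\inProd{d_i}{d_i}=\inProd{e-\alpha_id_i}{\jProd{d_i}{d_i}}\geq 0$ forces each $\alpha_i\leq 1$, whence $\matRank(e)\leq k$. If you want to keep your route, you should either prove the rank subadditivity in full (importing the quadratic-representation identities) or cite it; otherwise the trace argument is the more economical path.
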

\begin{proof}
	The first observation is that we may assume that $x \in \reInt \stdCone$. If not, 
	we pass to the minimal face $\stdFace$ of $\stdCone$ containing $x$. Then, 
	$x \in \reInt \stdFace$ and $\stdFace$ is a symmetric cone inside some 
	Euclidean Jordan algebra, due to Proposition~\ref{prop:faces}. 
	
	Next, since $\stdCone$ is homogeneous, there is a bijective linear transformation 
	$T$ that maps $x$ to the identity $e$ and satisfies $T(\stdCone) = \stdCone$. Since $T$
	maps extreme rays to extreme rays, we have that $\kappa (x) = \kappa(e)$.
	
	We will now show that $\kappa(e) = \matRank(e)$.
	Suppose that 
	$$
	e = z_1 + \ldots + z_{\kappa(e)},
	$$
	where each $z_i$ is nonzero and belongs to an extreme ray. Due to Corollary~\ref{col:extreme}, we 
	may assume that 
	\begin{equation}\label{eq:jd_r:1}
	e = \alpha _1 d_1 + \ldots \alpha _{\kappa (e)}d_{\kappa(e)},
	\end{equation}
	where the $\alpha _i$ are positive and the $d_i$  are primitive idempotents. 
	
	Recall that if we have any Jordan frame, since the sum of idempotents is equal to $e$, the 
	eigenvalues of $e$ are all equal to one. 
	Applying the trace map at both sides of Equation \eqref{eq:jd_r:1}, we conclude that
	\begin{equation}\label{eq:jd_r:2}
	\matRank(e) = \alpha _1 + \ldots + \alpha _{\kappa (e)}.
	\end{equation}
	We now examine the following expression.
	$$
	(1 - \alpha _i)d_i = \jProd{(e-\alpha_id_i)}{d _i}.
	$$
	We take the inner product with $d_i$:
	\begin{align*}
	(1-\alpha _i)\inProd{d_1}{d_1}  & = \inProd{\jProd{(e-\alpha_id_i)}{d _i}}{d_i } \\
	& = \inProd{e-\alpha_id_i}{\jProd{d_i}{d_i}} \\
	& = \inProd{e-\alpha_id_i}{{d_i}} \\
	& \geq 0.
	\end{align*}
	The second equality follows from the fact that the algebra is Euclidean. 
	The last inequality stems from Equation \eqref{eq:jd_r:1}, which implies 
	that $e-\alpha_id_i \in \stdCone$. Since  $\inProd{d_i}{d_i} > 0$, we must 
	have $1 \geq \alpha_i$, for every $i$. In view of Equation \eqref{eq:jd_r:2}, 
	we obtain $\matRank(e) \leq \kappa (e)$.
	
	Since we already know that $\kappa (e) \leq \matRank(e)$, we have 	$\matRank(e) = \kappa (e)$.

\end{proof}

\subsection{The longest chain of faces of a symmetric cone}
\begin{theorem}\label{theo:sym_long}
	Let $\stdCone$ be a symmetric cone. We have
	$$
	\ell _{\stdCone} = \matRank \stdCone + 1.
	$$
\end{theorem}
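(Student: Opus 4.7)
The plan is to establish the two inequalities $\ell_{\stdCone} \leq \matRank\stdCone + 1$ and $\ell_{\stdCone} \geq \matRank\stdCone + 1$ separately, both by leveraging Proposition~\ref{prop:faces}. The key insight is that faces of a symmetric cone are themselves symmetric cones, so the characterization of the minimal face via the rank can be applied \emph{inside} any given face.

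For the upper bound, I would first prove the monotonicity statement: if $\stdFace' \subsetneq \stdFace$ are two faces of $\stdCone$, then $\matRank \stdFace' < \matRank \stdFace$. By Proposition~\ref{prop:faces}, $\stdFace$ is the cone of squares of the Euclidean Jordan subalgebra $V(c,1)$ for some idempotent $c$, hence $\stdFace$ is itself a symmetric cone of rank $\matRank \stdFace$. The smaller face $\stdFace'$ is then a proper face of $\stdFace$, so picking any $x \in \reInt \stdFace'$ we have $\stdFace' = \minFacePoint{x}{\stdFace}$; the last statement of Proposition~\ref{prop:faces} (applied to the symmetric cone $\stdFace$) forces $\matRank x < \matRank \stdFace$, and item $(ii)$ gives $\matRank x = \matRank \stdFace'$. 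Consequently, in any chain $\stdFace_1 \supsetneq \ldots \supsetneq \stdFace_k$, the ranks form a strictly decreasing sequence of integers in $\{0, 1, \ldots, \matRank\stdCone\}$, yielding $k \leq \matRank\stdCone + 1$.

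For the lower bound, I would construct an explicit chain of length $r+1$, where $r = \matRank \stdCone$. Take a Jordan frame $c_1, \ldots, c_r$ of the identity $e$ of $\jAlg$, and set $s_k := c_1 + \ldots + c_k$ and $\stdFace_k := \minFacePoint{s_k}{\stdCone}$ for $k = 1, \ldots, r$. By Proposition~\ref{prop:faces}(ii) we have $\matRank \stdFace_k = k$, and in particular $\stdFace_r = \stdCone$ since $s_r = e \in \reInt \stdCone$ by Proposition~\ref{prop:int_char}. To see $\stdFace_k \subseteq \stdFace_{k+1}$, use Proposition~\ref{prop:faces}(i), which describes $\stdFace_k = \stdCone \cap \{c_{k+1} + \ldots + c_r\}^\perp$; if $x \in \stdCone$ satisfies $\inProd{x}{c_{k+1} + \ldots + c_r} = 0$, then because $x$ and each $c_j$ lie in the self-dual cone $\stdCone$, every term $\inProd{x}{c_j}$ vanishes individually, giving $x \in \{c_{k+2} + \ldots + c_r\}^\perp \cap \stdCone = \stdFace_{k+1}$. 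The inclusion is strict because the ranks differ, and appending $\{0\}$ at the bottom produces the chain $\{0\} \subsetneq \stdFace_1 \subsetneq \ldots \subsetneq \stdFace_r = \stdCone$ of length $r+1$.

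The substantive step is the strict monotonicity of rank along proper face inclusions; once that is in hand, both bounds are immediate. No real obstacle is anticipated, since Proposition~\ref{prop:faces} already encodes the facial structure precisely enough to make both directions short.
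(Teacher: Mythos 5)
Your proof is correct and follows essentially the same route as the paper: the lower bound via the chain of minimal faces of the partial sums $c_1+\ldots+c_k$ of a Jordan frame, and the upper bound by observing that ranks must strictly decrease along a proper chain of faces, so at most $\matRank\stdCone+1$ faces can occur. You are somewhat more explicit than the paper about why rank decreases strictly (applying Proposition~\ref{prop:faces} inside the face $\stdFace$, viewed as the symmetric cone of squares of $V(c,1)$), which is a welcome clarification rather than a deviation.
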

\begin{proof}
	First, we construct a chain of faces that has length $\matRank \stdCone + 1$. Let 
	$e$ be the identity element and $c_1, \ldots, c_r$ a Jordan frame for $e$, with 
	$r = \matRank \stdCone$. Then, 
	from Proposition~\ref{prop:faces}, we have 
	$$
	\stdCone \supsetneq \minFacePoint{c_1 + \ldots + c_{r-1}}{\stdCone} \supsetneq \ldots \supsetneq \minFacePoint{c_1}{\stdCone} \supsetneq \{0\}.
	$$
	Note that the inclusions are indeed strict, since $c_i \in \minFacePoint{c_1 + \ldots + c_{i}}{\stdCone}$ but 
	$c_i \not \in \minFacePoint{c_1 + \ldots + c_{i-1}}{\stdCone}$. This shows that there is at least 
	one chain of length $\matRank \stdCone + 1$.
	
	Now suppose that we have an arbitrary chain of faces 
	$$
	\stdFace _1 \supsetneq \ldots \supsetneq \stdFace _\ell.
	$$
	We can select $\ell$ points such that $x_i \in \reInt \stdFace _i$ for all $i$. With that 
	choice, we have $\stdFace _i = \minFacePoint{x_i}{\stdCone}$. 	Due to Proposition~\ref{prop:faces}, 
	the only way that those inclusions can be strict is if $\matRank x_i > \matRank x_{i+1}$ for all $i$. 
	Since $r \geq \matRank x_1$, we conclude that $\ell$ can be at most $r+1$.

\end{proof}

The upshot of this section is that for symmetric cones we have
$$
\kappa (\stdCone) = \matRank \stdCone = \ell _{\stdCone} - 1,
$$
so the bound in Theorem~\ref{theo:main} is tight.
\section{Comments on three other classes of cones}\label{sec:conc}
To conclude this work, we will make a few comments about some
cones of matrices. 
Denote by $\CP{n}$ the cone of $n \times n$ \emph{completely positive matrices}. 
Recall that a symmetric matrix  $X$ is said to be \emph{completely positive} if 
 there is an $n\times r$ matrix $V$ such that 
 $X = VV^\T$ and all the entries of $V$ are nonnegative. 
 The smallest $r$ for which this decomposition is possible is 
 called the \emph{cp-rank} of $X$.
 
Due to a result by Berman (see \cite{berman2003completely} and also Theorem~4.2 in \cite{D11}), $Y$ belongs to an extreme ray of $\CP{n}$ if and only if 
$Y = xx^\T$ for some nonzero $x$ such that all its entries are nonnegative.
This means that the \emph{cp-rank} of $X$ coincides with $\kappa(X)$ computed 
with respect to $\CP{n}$.

Translating to our terminology, one of the  open problems described in \cite{BDM15}
is to find a nontrivial upper bound to $\kappa(\CP{n})$. It is known that 
$\kappa(\CP{n}) = n$, for $n\leq 4$ and that $\kappa(\CP{5}) = 6$. For $n\geq 6$,
the current best result is that 
$$
\kappa(\CP{n}) \leq \frac{n(n+1)}{2} - 4,
$$
see Section~4.2 in \cite{BDM15} for more information on those results. 
We cannot help but speculate whether computing $\ell _{\CP{n}}$ could 
help lower this bound. It seems  that this might be an unexplored route. In low dimension the 
bound might fail to be tight, but it is said that the geometry of $\CP{n}$ changes heavily when 
$n$ increases.

Now, let $\doubly{n}$ denote the cone of \emph{symmetric doubly nonnegative matrices}. A symmetric 
matrix $X$ belongs to $\doubly{n}$ if it is positive semidefinite and all its entries are 
 nonnegative. The importance of $\doubly{n}$ is that it can be used to relax problems 
 over $\CP{n}$ and, in fact, for $n \leq 4$, we have $\doubly{n} = \CP{n}$.
Unfortunately, for $\doubly{n}$, Theorem~\ref{theo:main} does not shed much light 
on $\kappa(\doubly{n})$, since it was shown in Proposition~26 of \cite{lourenco_muramatsu_tsuchiya4} that $\ell _{\doubly{n}} = \frac{n(n+1)}{2} + 1 $.
In low dimension we know that the bound is not tight, since we have $\kappa(\doubly{n}) = \kappa(\CP{n}) = n$ for $n\leq 4$.
However,  $\kappa(\doubly{n})$ seems to be unknown for 
large $n$. We have, nevertheless, the following easy lower bound. 
\begin{proposition}
For the cone of $n\times n$ doubly nonnegative matrices we have 
$$
\kappa(\doubly{n}) \geq n.
$$ 
\end{proposition}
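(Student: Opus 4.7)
The plan is to exhibit an explicit element of $\doubly{n}$ whose Carath\'{e}odory number is at least $n$. The natural candidate is the identity matrix $I_n$, which belongs to $\doubly{n}$ since it is positive semidefinite and has nonnegative entries.

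The first step is to show that any decomposition $I_n = Y_1 + \cdots + Y_k$ with each $Y_i$ on an extreme ray of $\doubly{n}$ must consist of \emph{diagonal} matrices. Indeed, each $Y_i$ lies in $\doubly{n}$ and so has nonnegative entries, and for every off-diagonal position $(p,q)$ the equality $0 = (I_n)_{pq} = \sum_i (Y_i)_{pq}$ combined with $(Y_i)_{pq} \geq 0$ forces $(Y_i)_{pq} = 0$ for every $i$.

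The second step is to pin down which diagonal matrices span extreme rays of $\doubly{n}$. If a diagonal matrix $D \in \doubly{n}$ has two strictly positive diagonal entries at positions $j_1 \neq j_2$, I would split $D = D' + D''$, where $D'$ retains only the $j_1$-th diagonal entry and $D''$ retains the rest; both summands are nonzero elements of $\doubly{n}$ that are not proportional to $D$, so $D$ is not on an extreme ray. Hence each $Y_i$ must have the form $\alpha_i e_{j(i)} e_{j(i)}^\T$ with $\alpha_i > 0$ and $j(i) \in \{1,\ldots,n\}$ (and such matrices do span extreme rays, e.g.\ because they are already extreme in $\PSDcone{n} \supseteq \doubly{n}$).

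Matching the diagonals in $I_n = \sum_i \alpha_i e_{j(i)} e_{j(i)}^\T$ then requires the index set $\{j(i) : 1 \leq i \leq k\}$ to exhaust $\{1, \ldots, n\}$, so $k \geq n$, giving $\kappa(\doubly{n}) \geq \kappa(I_n) \geq n$. There is no serious obstacle: the whole argument rests on the observation that the zero off-diagonals of $I_n$, together with entrywise nonnegativity, force each summand to be supported on the diagonal, after which the structure of diagonal extreme rays is immediate.
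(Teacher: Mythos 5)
Your proof is correct, and it reaches the conclusion by a more hands-on route than the paper. The paper's argument is structural: it observes that the cone $T_n$ of nonnegative diagonal matrices is a face of $\doubly{n}$ (being the intersection of a face of $\PSDcone{n}$ with a face of $\nonNegative{n}$), invokes the general fact that $\kappa(\stdCone) \geq \kappa(\stdFace)$ for any face $\stdFace$, and then applies Theorem~\ref{theo:bounds}$(i)$ to the $n$-dimensional polyhedral cone $T_n$ to get $\kappa(T_n) = n$. Your argument unpacks all of this explicitly for the single element $I_n$: your first step (zero off-diagonals plus entrywise nonnegativity force every summand to be diagonal) is in effect a direct verification that the diagonal matrices form a face of $\doubly{n}$, your second step identifies the extreme rays of that face by hand as the rays spanned by $e_j e_j^\T$, and your counting step replaces the appeal to the polyhedral theorem. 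What the paper's route buys is brevity and reusability of the general lemmas; what yours buys is self-containment and an explicit witness ($I_n$) realizing the lower bound, with no reliance on the measure-theoretic argument behind Theorem~\ref{theo:bounds}$(i)$. One cosmetic remark: your parenthetical confirming that $\alpha\, e_j e_j^\T$ really does span an extreme ray of $\doubly{n}$ (because it spans one of $\PSDcone{n} \supseteq \doubly{n}$) is not needed for the lower bound itself --- for that you only need that every extreme-ray summand of $I_n$ has this form --- but it is correct and harmless.
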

\begin{proof}
First, note that if $\stdFace$ is a face of some cone $\stdCone$, then 
$\kappa(\stdCone) \geq \kappa(\stdFace)$. We will proceed by showing 
the existence of a face of $\doubly{n}$ whose Carath\'{e}odory number is equal 
to $n$.

Note that $\doubly{n} = \PSDcone{n} \cap \nonNegative{n}$, where 
$\nonNegative{n}$ is the cone of symmetric matrices with nonnegative 
entries. Let $T_n$ denote the cone of diagonal matrices with nonnegative 
entries. Note that $T_n$ is a face of $\nonNegative{n}$ and satisfies
$T_{n} =  \PSDcone{n} \cap T_{n}$. 
As $T_n$ is the intersection of a face of $\PSDcone{n}$ with a face 
of $\nonNegative{n}$, we  conclude
that $T_n$ is a face of $\doubly{n}$. As $T_n$ is polyhedral and $\dim{T_n} = n$, we 
obtain  $\kappa(T_n) =n $, by  item $i.$ of Theorem \ref{theo:bounds}. It 
follows that $\kappa(\doubly{n}) \geq n$.
\end{proof}

We also do not have much of an idea of what happens with $\ell_{\stdCone}$ and 
$\kappa (\stdCone)$ when $\stdCone$ is the cone of copositive matrices $\COP{n}$. Recall that 
a symmetric matrix $X$ is said to be copositive if $v^\T X v \geq 0$ for all $v$ with 
nonnegative entries.

Table~\ref{table:bounds_summary} summarizes what is known about the Carath\'{e}odory number and the size of 
the longest chain of faces for a few families of cones.

\begin{table}
\centering
\begin{tabular}{c|c|c}
$\stdCone$ & $\kappa (\stdCone)$ & $\ell _\stdCone$ \\  \hline
polyhedral cone of dimension $k$ & $k$ & $k+1$\\
symmetric cone of rank $r$ & $r$ & $r+1$\\
smooth cone & 2 & 3\\
$n\times n$ doubly nonnegative cone & $\geq n$ & $\frac{n(n+1)}{2} + 1 $ \\
$n\times n$ completely positive cone & nontrivial bounds are known \cite{BDM15} & ? \\
$n \times n$ copositive cone & ? & ? 
\end{tabular} \caption{Values of $\ell_{\stdCone}$ and $\kappa(\stdCone)$}\label{table:bounds_summary}
\end{table}

\small{
	\section*{Acknowledgements}
	The second author would like to thank Prof. Masakazu Muramatsu and Prof. Takashi Tsuchiya for valuable feedback 
	during the writing of this article. 
}

\bibliographystyle{plainurl}
\bibliography{bib}

\appendix
\section{Proof of Proposition \ref{prop:cone_base}}\label{app:prop}

\begin{proposition*}
	Let $C \subseteq \Re^n$ be a nonempty compact convex set. Let 
	$$
	\stdCone  = \{(\alpha, \alpha x) \mid \alpha \geq 0, x \in C \}.	
	$$
	Then
	\begin{enumerate}[$(i)$]
		\item $\stdCone $ is a pointed closed convex cone.
		\item Let $\stdFace$ be a face of $\stdCone $ that is not $\{0\}$, then 
		$$
		\stdFace _C = \{ x \in C \mid  (1,x) \in \stdFace \}
		$$is a face of $C$. Moreover, $\dim \stdFace _C = \dim \stdFace  - 1$. 
		\item Let $\stdFace _C$ be a face of $C$, then 
		$$
		\stdFace = \{ (\alpha, \alpha x)  \mid \alpha \geq 0, x \in \stdFace _C \}
		$$is a face of $\stdCone$. Moreover, $\dim \stdFace = \dim \stdFace _C + 1$.
		
	\end{enumerate}
\end{proposition*}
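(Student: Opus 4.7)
For (i), the plan is to verify each property directly from the definition $\stdCone = \{(\alpha,\alpha x)\mid \alpha\geq 0,\ x\in C\}$. Convexity and conicity follow by writing a nonnegative combination $\lambda_1(\alpha_1,\alpha_1 x_1)+\lambda_2(\alpha_2,\alpha_2 x_2)$ as $(s,s\cdot\tfrac{\lambda_1\alpha_1 x_1+\lambda_2\alpha_2 x_2}{s})$ with $s=\lambda_1\alpha_1+\lambda_2\alpha_2$, where the second coordinate, after dividing by $s$, is a convex combination of points in $C$ (handling $s=0$ separately). Pointedness is immediate: any element of $\stdCone$ has nonnegative first coordinate, so $v,-v\in\stdCone$ forces $v=0$. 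For closedness, I would take a sequence $(\alpha_k,\alpha_k x_k)\to(\alpha,y)$; if $\alpha>0$, eventually $\alpha_k>0$ and $x_k=\alpha_k x_k/\alpha_k\to y/\alpha\in C$ since $C$ is closed, while if $\alpha=0$ compactness of $C$ forces $\alpha_k x_k\to 0=y$.

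For (ii), the face property is essentially a one-line transfer: if $x,y\in C$ and $\alpha x+(1-\alpha)y\in\stdFace_C$ with $0<\alpha<1$, then $(1,\alpha x+(1-\alpha)y)=\alpha(1,x)+(1-\alpha)(1,y)\in\stdFace$, and since $(1,x),(1,y)\in\stdCone$ and $\stdFace$ is a face, both $(1,x),(1,y)\in\stdFace$, giving $x,y\in\stdFace_C$. For the dimension identity, let $V=\spanVec\stdFace\subseteq\Re^{n+1}$. Since $\stdFace\neq\{0\}$ it contains some $(\alpha_0,\alpha_0 x_0)$ with $\alpha_0>0$, so the first-coordinate projection $\pi_1:V\to\Re$ is surjective, giving $\dim(V\cap\{x_0=1\})=\dim\stdFace-1$. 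I then show $\affineHull\stdFace_C=V\cap\{x_0=1\}$ (identified with a subset of $\Re^n$): the inclusion $\subseteq$ is clear, and for $\supseteq$, any $y\in V\cap\{x_0=1\}$ is a linear combination $\sum\lambda_i(\alpha_i,\alpha_i x_i)$ with $\sum\lambda_i\alpha_i=1$, which rewrites as an affine combination $\sum(\lambda_i\alpha_i)(1,x_i)$ of points in $\stdFace_C$.

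For (iii), the face property is symmetric: suppose $(a_1,b_1),(a_2,b_2)\in\stdCone$ with $\alpha(a_1,b_1)+(1-\alpha)(a_2,b_2)\in\stdFace$ for some $0<\alpha<1$. After discarding trivial zero summands, write $(a_i,b_i)=(\alpha_i,\alpha_i x_i)$ with $\alpha_i>0$ and set $t=\alpha\alpha_1+(1-\alpha)\alpha_2>0$; then the combination equals $(t,t\cdot y)$ where $y$ is a convex combination of $x_1,x_2$, and by the definition of $\stdFace$ we get $y\in\stdFace_C$, so the face property of $\stdFace_C$ inside $C$ yields $x_1,x_2\in\stdFace_C$, hence $(a_i,b_i)\in\stdFace$. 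For the dimension claim, pick $x_0\in\reInt\stdFace_C$ and let $L$ be the direction space of $\affineHull\stdFace_C$, so $\dim L=\dim\stdFace_C$. Every element of $\stdFace$ is of the form $\beta(1,x_0)+(0,\beta(x-x_0))$ with $\beta(x-x_0)\in L$, so $\spanVec\stdFace\subseteq \Re(1,x_0)+\{0\}\times L$; conversely both $(1,x_0)$ and differences $(1,y)-(1,x_0)=(0,y-x_0)$ for $y\in\stdFace_C$ lie in $\spanVec\stdFace$, giving the reverse inclusion and therefore $\dim\stdFace=\dim L+1=\dim\stdFace_C+1$.

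The only mildly subtle point is keeping track of the degenerate summands $(0,0)$ in the parameterization $(\alpha,\alpha x)$, which represent the origin and can collapse denominators; everywhere I would either absorb them into the trivial case or normalize only after verifying that the relevant first-coordinate sum is positive.
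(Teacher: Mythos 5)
Your proposal is correct and follows essentially the same route as the paper: the face properties are transferred back and forth via the map $x \mapsto (1,x)$, and closedness is handled by the same two-case sequence argument. The only cosmetic difference is in the dimension bookkeeping — you compute $\spanVec\stdFace$ and $\affineHull\stdFace_C$ directly (and in $(iii)$ redo the computation rather than reduce to $(ii)$), whereas the paper passes between affinely independent families in $\stdFace_C$ and linearly independent families in $\stdFace$; both arguments rest on the same observation that every nonzero element of $\stdFace$ has positive first coordinate.
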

\begin{proof}
	\begin{enumerate}[$(i)$]
		\item Here we only show the closedness of $\stdCone$.
		Let $$\{(\alpha _k, \alpha _k x_k)  \}_{k = 1}^{+\infty} \subset \stdCone$$ be a sequence converging to $(\alpha ^*, z)$, with 
		$x_k \in C$ for all $k$.
		When $\alpha^* = 0$, the compactness of $C$ leads to $z = \lim_{k \to +\infty} \alpha_k x_k = 0$
		concluding that $(\alpha^*, z) \in \stdCone$.
		In the case $\alpha^* > 0$, we have $\alpha_k > 0$ for all sufficiently large $k$.
		Then, we see that $z/\alpha^* \in C$ since $\{(1,x_k)\} = \{\frac{1}{\alpha_k}(\alpha_k, \alpha_k x_k)\}$ converges to
		$\frac{1}{\alpha^*} (\alpha^*, z) = (1, z/\alpha^*)$ and $C$ is closed.
		Therefore, $(\alpha^*, z) \in \stdCone$.
		
		\item First of all, $\stdFace _C$ is a convex set, since it is a projection on $\Re^n$ of 
		the intersection between $\stdCone$ and the hyperplane 
		$$
		\{(\alpha, x) \mid \alpha = 1 \}.
		$$ 
		Now, let $x,y \in C$ be such that
		$$
		\gamma x + (1-\gamma)y \in \stdFace _C,
		$$for some $0 < \gamma < 1$. Therefore, 
		$$
		\gamma (1, x) + (1-\gamma)(1,y ) \in \stdFace.
		$$
		As $\stdFace$ is a face of $\stdCone$, we conclude that $(1,x),(1,y) \in \stdFace$ and that 
		$x,y \in \stdFace _C$.
		Hence, $\stdFace _C$ is a face of $C$.
		
		Let $s = \dim \stdFace _C$ and take an affinely independent subset $\{x_0,\ldots,x_s \}$ of $\stdFace_C$.
		Then, the implications
		\begin{align*}
		\sum_{i=0}^s \gamma_i (1,x_i) = 0
		& ~\Rightarrow~ \sum_{i=0}^s \gamma_i = 0\\
		~ \sum_{i=0}^s \gamma_i x_i =  0
		& ~\Rightarrow~ \gamma_0 = \cdots = \gamma_s = 0
		\end{align*}
		show that $\{(1,x_i)\}_{i=0}^s \subset \stdFace$ are \emph{linearly} independent.
		This means that $\dim\stdFace_C + 1 = s + 1 \leq \dim\stdFace$.
		Conversely, let $t = \dim\stdFace$ and $\{(\alpha_i, \alpha_i x_i)\}_{i=1}^t  \subset \stdFace$ be linearly independent.
		Then we have $\alpha_i \ne 0$ so that $\{x_i\}_{i=1}^t \subset \stdFace _C$ follows and
		its affine independence can be shown in a similar manner.
		This yields that $\dim \stdFace - 1 = t-1 \leq \dim\stdFace _C$ and therefore $\dim\stdFace _C = \dim\stdFace -1$ holds.
		
		\item It is straightforward to check that $\stdFace$ is a subset of $\stdCone$ that is a  convex cone.
		We will check that it is indeed a face. 
		Suppose that $(\alpha _1 ,\alpha _1 x_1), (\alpha _2 ,\alpha _2 x_2) \in \stdCone$ are such that 
		$$
		(\alpha _1 +\alpha _2 ,\alpha _1 x_1 + \alpha _2 x_2) \in \stdFace.
		$$
		Furthermore, suppose that both $\alpha _1$ and $\alpha _2$ are greater than zero.
		By definition, we have 
		$$
		\alpha _1 x_1 + \alpha _2 x_2 = (\alpha _1 + \alpha _2)z,
		$$ 
		for some $z \in \stdFace _C$. This means that 
		$$
		\frac{\alpha _1}{\alpha _1 + \alpha _2}x_1 + \frac{\alpha _2}{\alpha _1 + \alpha _2}x_2 = z,
		$$
		so that $x_1$ and $x_2$ belong to $\stdFace _C$. Therefore, both $(\alpha _1 ,\alpha _1 x_1)$ and  $(\alpha _2 ,\alpha _2 x_2)$ belong to $\stdFace$.
		Thus, $\stdFace$ is a face of $\stdCone$.
		
		Finally, notice that the face
		$$
		\{x \in C \mid (1,x) \in \stdFace\}
		$$
		coincides with $\stdFace _C$.
		Hence, from  assertion $(ii)$ we obtain $\dim\stdFace = \dim\stdFace_C + 1$.

		
	\end{enumerate}	
\end{proof}

\end{document}